\date{}
\newtheorem{proposition}{Proposition}[section]
\newtheorem{theorem}[proposition]{Theorem}
\newtheorem{lemma}[proposition]{Lemma}
\newtheorem{definition}[proposition]{Definition}
\newtheorem{corollary}[proposition]{Corollary}
\def\der{\partial }
\def\nFM0{{\nu }_{F,M_0}}
\def\nFN0{{\nu }_{F,N_0}}
\def\nGN0{{\nu }_{G,N_0}}
\def\N0{ {\bf N}_0 }
\def\t{\otimes}
\def\ra{\rightarrow}
\def\Xpm{X^{\pm }}
\def\s{\sigma}
\def\Z{\mathbb{Z}}
\def\l1{{\lambda}_1}
\def\a{\alpha}
\def\a0{ {\alpha }_0}
\def\a1{ {\alpha }_1}
\def\l{\lambda}
\def\nFGM0{{\nu }_{F,G,M_0}}
\def\nFN0{{\nu}_{F,N_0}}
\def\sm{{\sigma}^m}
\def\sm1{{\sigma}^{-1}}
\def\smtp1{{\sigma}^{-t+1}}
\def\S1{S^{-1}}
\def\Xpm1{X^{\pm 1}_1}
\def\sPM1{{\sigma }^{\pm 1}}
\def\sMP1{{\sigma }^{\mp 1 }}
\def\b{\beta}
\def\d{\delta}
\def\di{{\rm d.ind}}
\def\L{\Lambda}
\def\O{\Omega}
\def\G{\Gamma}
\def\CA{{\cal A}}
\def\CD{{\cal D}}
\def\Ytm1{Y^{t-1}}
\def\Yim1{Y^{i-1}}
\def\CL{{\cal L}}
\def\CN{{\cal N}}
\def\CF{{\cal F}}
\def\CG{{\cal G}}
\def\Aut{{\rm Aut}}
\def\bK{\overline{K}}
\def\Der{{\rm Der }}
\def\dim{{\rm dim }}
\def\char{{\rm char }}
\def\SL2Z{ {\rm SL}_2({\bf Z}) }
\def\CL{{\cal L}}
\def\Gp1{ G^{1 , 1 } }
\def\P11{ P^{-1 , 1 } }
\def\Pp1{ P^{1 , 1 } }
\def\nCLsr{{}^\nu\kern-2pt {\cal L}^{\sigma , \rho  }}
\def\nP{{}^\nu \kern-2pt P}
\def\nL{{}^\nu\kern-2pt L}
\def\nLL{{}^\nu\kern-2pt \Lambda}
\def\nPsr{{}^\nu\kern-2pt P^{\sigma , \rho  }}
\def\nLsr{{}^\nu\kern-2pt L^{\sigma , \rho  }}
\def\nuCL{{}^\nu\kern-2pt  {\cal L}}
\def\nCLsr{{}^\nu\kern-2pt {\cal L}^{\sigma , \rho  }}
\def\nCL1m{{}^\nu\kern-2pt {\cal L}^{-1 , 1  }}
\def\x1nu{x^\frac{1}{\nu}}
\def\xm1nu{x^{-\frac{1}{\nu}}}
\def\CN{{\cal N}}
\def\ra{\rightarrow }
\def\CB{{\cal B}}
\def\nAM0{{\nu }_{{\cal A},M_0}}
\def\nAN0{{\nu }_{{\cal A},N_0}}
\def\End{ {\rm End }}
\def\Der{ {\rm Der }}
\def\SL{{\rm SL}}
\def\di!{\frac{\der^i}{i!}}
\def\dik!{\frac{\der^k_i}{k!}}
\def\Fp{\mathbb{F}_p}
\def\gl{\mathfrak{l}}
\def\Max{{\rm Max}}
\def\N{\mathbb{N}}
\def\0{\overline{0}}
\def\1{\overline{1}}
\def\Ln1{\L_{n,\overline{1}}}
\def\a1{a_{\overline{1}}}
\def\S{\Sigma}
\def\vn1{\overrightarrow{n-1}}
\def\gl{{\rm gl}}
\def\sl{{\rm sl}}
\def\mL{\mathbb{L}}
\def\gf{\mathfrak{f}}
\def\mJ{\mathbb{J}}
\def\mI{\mathbb{I}}
\def\mF{\mathbb{F}}
\def\K1{{\rm K}_1}
\def\hmI1{\widehat{\mI_1}}
\def\tmI1{\widetilde{\mI_1}}
\def\tmJ1{\widetilde{\mJ_1}}
\def\hB1{\widehat{B_1}}
\def\hCB1{\widehat{\CB_1}}
\def \S{\mathcal{S}}
\def\sl2{\mathfrak{sl}_2}
\def\Deg{{\rm Deg}}
\def\sl2{\mathfrak{sl}_2}
\def\gl2{\mathfrak{gl}_2}
\def\b1{\overline{1}}
\def\fCK{{\mathfrak{C}}(K)}
\def\fCdK{{\mathfrak{C}}_d(K)}
\def\gl{{\mathfrak{l}}}
\newenvironment{proof*}[1][\proofname]{\par
  \pushQED{\qed}%
  \normalfont \partopsep=\z@skip \topsep=\z@skip
  \trivlist
  \item[\hskip\labelsep
        \itshape
    #1\@addpunct{.}]\ignorespaces
}{%
  \popQED\endtrivlist\@endpefalse
}
\begin{document}

\author{V. V. \  Bavula 
%(AnGaloisTh-ArbFields-NEW.tex)
}

\title{Analogue of the Galois Theory for arbitrary finite field extensions 
}

\maketitle

\begin{abstract}

This paper resolves the long-standing open problem of establishing a unified Galois theory for all finite field extensions. This paper is a finishing touch to the (over 200 years) {\em classical} `Galois Theory' of {\em arbitrary}  finite field extensions, i.e. the goal of it is to describe intermediate subfields of an arbitrary finite field extension  via {\em invariants} of `natural/obvious' objects that  are associated with subfields via two Galois-type correspondences.  

The classical Galois Theory covers the case of  finite Galois field extensions.  For finite  Galois field extensions the objects are their  Galois groups and their invariants. 

 In \cite{GaloisTh-RingThAp},  
we introduce a new  (ring theoretic) approach to the Galois Theory which is based on
 the  {\em principle  of maximal symmetry}.   In \cite{AnGaloisTh-NORMAL-Fields}, the  maximal symmetry of {\em normal} finite field extensions yields an analogue of the  Galois Theory for them. For a normal  finite field extension $L/K$ the  `natural/obvious' objects are  the subalgebra $\CD (L/K)\rtimes G(L/K)$ of $\End (L/K)$ that is generated by the automorphism group $G(L/K)$ and the algebra $\CD (L/K)$ of differential operators on $L/K$ and their  `invariants'.  The `maximal symmetry' means the equality  $\End (L/K)=\CD (L/K)\rtimes G(L/K)$  which turns out to be  a characteristic property of  {\em normal} finite field extensions, \cite{AnGaloisTh-NORMAL-Fields}.
 
 In the Galois theory,  the two Galois Correspondences are given in terms of the fields $L^H$ of $H$-{\em invariants} where $H$ is a subgroup of the Galois group $G(L/K)$. But for normal field extensions,  analogues of the two Galois Correspondences are given in terms of the fields  $L^H\cap L^{\CG}$ of $H$-invariants and $\CG$-{\em invariants/constants} where $\CG$ is a dominant Lie algebra and $(\CG , H)$ is a dom-group.

  The aim of this paper is  to obtain an analogue of the Galois Theory for {\em arbitrary} finite field extensions based on  the group-Lie algebra-theoretic approach, results and ideas of  \cite{GaloisTh-RingThAp} and \cite{AnGaloisTh-NORMAL-Fields}.

$\noindent$

{\em Key Words: finite field extension,  B-extension, Galois extension, the Galois group, the Galois Correspondence, balanced Lie algebra,  dominant Lie algebra, dom-group,  skew group algebra, invariants, algebra of differential operators, differential constants, centralizer, central simple algebra, the Double Centralizer Theorem. }

{\em  Mathematics subject classification 2020: 
11S20, 12F10, 16S32,  12F05, 12F15,  16G10.}

{ \small \tableofcontents}

\end{abstract}

%%%%%%%%%%%%%%%%%% SECTION 1 %%%%%%%%%%%%%%%%%%%%%

\section{Introduction} \label{INTROD} %\marginpar{INTROD}

In this paper, module means a {\em left} module.  The following
notation will remain fixed throughout the paper (if it is not
stated otherwise): 
 
 \begin{itemize}
 
% \item $\N=\{0,1,  \ldots\}$ is the set of natural numbers, $\N_+:=\{1,2,   \ldots\}$, $p$ is a prime number and $\N_{<p}:=\{0,1, \ldots , p-1\}$, 

\item $K$ is a  field and $\bK$ is its algebraic closure,  

\item If  the field $K$ has  characteristic $p>0$,  the map  $\gf :\bK\ra \bK$, $\l \mapsto \l^p$ is called  the Frobenius monomorphism and   $\overline{\mF}_p$ is  the algebraic closure of the field $\Fp = \Z / p\Z$, 

\item $L/K$ is a finite field extension,
 
\item  $G:=G(L/K)$ is the automorphism group of the $K$-algebra $L$. If $L/K$ is a Galois field extension then  $G(L/K)$ is called the Galois group of $L/K$, 

\item $L^{G(L/K)}:=L^G:=\{ l\in L\, | \, \s (l)=l$ for all $\s\in G(L/K)\}$ is the field of $G(L/K)$-invariants,

\item  $\CF (L/K)$ and  $\CG (L/K)$ are the sets of all and Galois subfields of $L/K$, respectively,

\item    $\CG (G(L/K))$ and  $\CN(G(L/K))$ are  the sets of all and normal subgroups of $G(L/K)$, respectively, 

\item $\O_{L/K}$ is the module of K\"{a}hler differentials of the $K$-algebra $L$,

\item $\CD :=\CD (L/K)=L\oplus \CD (L/K)_+$ is the algebra of $K$-linear differential operators on the field extension $L/K$ where 
$$\CD (L/K)_+:=\{\d \in \CD (L/K)\, | \, \d (1)=0\}$$
 is the left ideal of differential operators with  zero  constant term,

\item $L^{ \CD (L/K)_+}:=\{ l\in L\, | \, \d(l)=0$ for all $\d \in  \CD (L/K)_+\}$ is the field of $ \CD (L/K)_+$-constants,

\item $\Der_K(L):=\Der (L/K)$ is the Lie algebra of $K$-derivations,

%\item  $\D (L):= \D (L/K):=L\langle \Der_K(L)\rangle\subseteq \CD (L/K)$ is the derivation algebra of  $L/K$,

\item The field $L_{dif}:=(L/K)_{dif} :=
%L_{\CD (L/K)}:=L_\CD:=
\{ l\in L\, | \, \d l = l\d$ for all $\d \in \CD (L/K)\}$  is the centralizer in $L$ of the algebra of differential operators on $L/K$,

%\item $L^{G(L/K)}_{dif}:=L^{G(L/K)}_{\CD (L/K)}:=L^G_{dif}:= L^G\cap L_{dif}$, 

\item $E:=E(L/K):=\End (L/K):=\End_K (L)\simeq M_n(K)$ is the endomorphism algebra of the $K$-vector space $L$ and  $M_n(K)$ is the algebra of $n\times n$ matrices over $K$ where $n=[L:K]:=\dim_K(L)$ is the degree of the field extension $L/K$,

\item For  subsets $S$ and $T$ of $E(L/K)$, $C_{E(L/K)}(S):=\{ c\in E(L/K)\, | \, cs=sc$ for all $s\in S\}$ is the centralizer of $S$ in $E(L/K)$ and  $C_{T}(S):=\{ t\in T\, | \, ts=st$ for all $s\in S\}$ is the centralizer of $S$ in $T$, 

\item Fora  field $ M\in \CF (L/K)$, let
\begin{eqnarray*}
\CD (L/K)^M &:=& \{ \d \in \CD (L/K)\, | \, \d m=m\d\;\; {\rm for\; all}\;\; m\in M\}=C_{\CD (L/K)}(M),\\
 G(L/K)^M&:=& \{ g\in G(L/K)\, | \, g(m)=m\;\; {\rm for\; all}\;\; m\in M \}=C_{G(L/K)}(M),
\end{eqnarray*}

\item $L^{pi}$, $L^{sep}$, $L^{nor}$  and  $L^{gal}$ are the largest pure inseparable/separable/normal/ Galois subfields of the field extension $L/K$, respectively, 

%\item $L^{pi}_{dif}:=L^{pi}_{\CD (L/K)}:= L^{pi}\cap L_{dif}$,

%\item  $(L/K)_{nor}=L_{nor}$ is the least  (w.r.t. $\subseteq$)  co-normal subfield of a finite field extension $L/K$, 

\item $\CA (E(L/K),L)$ is the set of all $K$-subalgebras of $E(L/K)$ that contain the field $L$,

\item For an algebra $A\in  \CA (E(L/K),L)$,  let
\begin{eqnarray*}
L^{A\cap \CD (L/K)_+}  &:=& \{ l \in L\, | \, \d (l)=0\;\; {\rm for\; all}\;\; \d\in A\cap \CD (L/K)_+\},\\
&\stackrel{{\rm Cor.}\, \ref{a1Jun25}}{=}&\{ l \in L\, | \, \d l=l\d\;\; {\rm for\; all}\;\; \d\in A\cap \CD (L/K)_+\},\\
 L^{A\cap G(L/K)}&:=& \{  l \in L\, | \, g(l)=l\;\; {\rm for\; all}\;\; g\in A\cap G(L/K) \},
\end{eqnarray*} 

\item $\CA (E(L/K),L, G(L/K)):=\{ A\in  \CA (E(L/K),L)\, | \, gAg^{-1}\subseteq A$ for all $g\in G(L/K)\}$,

\item For a field $F\in \CF (L/K)$, let 
$\CA (E(L/K),L, F) := \{A\in  \CA (E(L/K),L)\, | \, C_{E(L/K)}(F)\subseteq A\}$,

\item $\CA (E(L/K),L, F, G(L/K)):=\{ A\in \CA (E(L/K),L, F)\, | \, g A g^{-1}= A\;\; {\rm for \; all}\;\; g\in G(L/K)\}$,

%\item $K$ is a field of characteristic $p>0$,  $\bK$ is its algebraic closure and $\gf :\bK\ra \bK$, $\l \mapsto \l^p$ is the Frobenius monomorphism, 

%\item $K_{perf}:=\bigcap_{i\geq 0}\gf^{i}(K)\subseteq K\subseteq K^{perf}:=\bigcup_{i\geq 0}\gf^{-i}(K)\subseteq \bK$ where  $K_{perf}$ is the largest perfect subfield of $K$ and $K^{perf}$ is the perfect closure of $K$ in $\bK$ (the smallest  perfect overfield of $K$ in $\bK$), 

\item $\CL (L/K)$ is the  set of balanced Lie subalgebras $\CG$ of the Lie algebra $\CD (L/K)_+$, i.e.  $C_L(\CG) = L^\CG$,

\item $\mL (L,K)$ is the set of dominant Lie algebras, 

\item $\mL G(L/K):=\{ (\CG, H)\in \mL (L/K)\times \CG (G(L/K))\, | \, h\CG h^{-1}=\CG$    for all elements $h\in H\}$ is the set of dom-groups of $L/K$,

%\item  $\mL G(L/K, F))=$,

\item $\fCK$ is  the class of   central simple finite dimensional  $K$-algebras and  $\fCdK$ is  the class of  central simple finite dimensional  division $K$-algebras,

%\item

\end{itemize}

   The following two results are the main results of the classical Galois Theory: {\em Let a finite field extension $L/K$ be a Galois field extension. Then:}
  \begin{enumerate}

\item {\sc (The Galois Correspondence for subfields)} 
{\em The map
$$
\CF (L/K)\ra \CG (G(L/K)), \;\; M\mapsto  G(L/M)
$$ 
is a bijection with inverse} $
H\mapsto L^H.$

\item {\sc (The  Galois Correspondence for Galois subfields)} 
{\em The map
$$
\CG (L/K)\ra \CN (G(L/K)), \;\; \G\mapsto  G(L/\G)
$$ 
is a bijection with inverse} $
\G\mapsto L^{\G}.$

\end{enumerate}

Finite purely inseparable field extensions have trivial automorphism group but rich subfield structure. They are normal field extensions but normal field extensions are a larger class. For a long time it was an open problem to produce a `Galois Theory' for normal finite field extensions and the class of finite purely inseparable field extensions was a natural starting point. 

  In \cite{Jacobson-1937, Jacobson-1944} (1937, 1944), Jacobson  introduced an analogue  of the Galois theory for {\em purely inseparable field  extensions  $L/K$ of exponent 1} (meaning that the $p$'th power of every element of $L$ is in $K$), where the Galois groups (which are trivial) are replaced by {\em restricted Lie algebras of derivations}.  
 Jacobson's theory establishes a correspondence between intermediate fields of a purely inseparable extension of exponent 1 and restricted Lie subalgebras of derivations.   The relationship between derivations and purely inseparable field extensions of exponent 1 was studied by Baer \cite{Baer-1927} (1927).

 A field extension is called {\em simple} or {\em primitive} if it  is generated by a single element.  A purely inseparable extension is called a {\em modular extension} if it is a tensor product of finitely  many  simple field extensions. Modular extensions can be seen as `free' objects in the class of finite purely inseparable field extensions (in the same vein as polynomial algebras are `free' objects in the Algebraic Geometry), i.e. they satisfy no relations (apart from the obvious ones). In particular, {\em every extension of exponent 1 is modular}, but there are non-modular extensions of exponent 2 as shown by Weisfeld \cite{Weisfeld-1965} (1965). In more detail, let us   consider subfields  $K=\Fp (x^p,y^p,z^{p^2})$ and $L=K (z,xz+y)$ of the field $\Fp (x,y,z)$ of rational functions in 3 variables.  Weisfeld \cite{Weisfeld-1965}  proved that the field extension $L/K$ is purely inseparable field extension of exponent 2 which is not modular.

  Sweedler \cite{Sweedler-1968} (1968) and Gerstenhaber  and  Zaromp \cite{Gerstenhaber-Zaromp-1970} (1970) produced an analogue of the  Galois Theory for  {\em modular} extensions, where restricted Lie algebras of  derivations are replaced by {\em higher derivations}, i.e. the {\em Hasse-Schmidt derivations}, \cite{Hasse-Schmidt-1937} (1937). A survey of Galois theory for inseparable field extensions is given in the paper of  Deveney  and Mordeson \cite{Deveney-Mordeson-1996} (1996).

 In \cite{AnGaloisTh-NORMAL-Fields}, Bavula  introduced B-extensions which are the {\em most symmetrical} finite field extensions (a finite field extension $L/K$ is called a {\em B-extension}  if the endomorphism algebra $\End_K(L)$ is generated by the algebra of differential operators $\CD (L/K)$ on the $K$-algebra  $L$ and the automorphism group $G(L/K):=\Aut_{K-{\rm alg}}(L)$) and  obtained an analogue of the Galois Theory for B-extensions.  Surprisingly, it turned out the class of B-extensions coincided with the class of {\em normal} finite field extensions  giving a new characteristic property of them. As a result, an analogue of the Galois Theory is obtained for normal field extensions \cite{AnGaloisTh-NORMAL-Fields}. In particular, all Galois field extensions and all purely inseparable field extensions are B-extensions. Bavula's approach is a ring theoretic (characteristic free)  approach which is based on central simple algebras.
In this approach, analogues of the Galois Correspondences (for subfields and normal  subfields of $L$) are deduced from the Double Centralizer Theorem which is applied to the central simple algebra  $\End_K(L)$ and subfields of B-extensions. 

  Since Galois finite field extensions are B-extensions, this approach gives a new conceptual (short) proofs of the  key results of the Galois Theory, see \cite{GaloisTh-RingThAp}  for details. It also reveals that the `{\bf maximal symmetry}' (of {\em normal} field extensions) is the essence of the classical Galois Theory and the analogue of the Galois Theory for normal field extensions.

 For a finite field extension $L/K$,  notice that $\CD (L/K)=L\oplus \CD (L/K)_+$ where 
$$\CD (L/K)_+:=\{\d \in \CD (L/K)\, | \, \d (1)=0\}$$
 is the {\bf left ideal of proper  differential operators} on $L/K$  which is a {\bf Lie algebra} with commutator of elements as the Lie bracket. The left $\CD (L/K)$-module $L$ is isomorphic to the 
 factor module $\CD (L/K)/\CD (L/K)_+$.
  For a subset $S\subseteq \CD (L/K)$, let 
  $$
  C_L(S) := \{l\in L\, |\, ls=sl\; {\rm  for\; all}\;s\in S \}\;\; {\rm and}\;\; 
  L^S := \{l\in L\, |\, s(l)=0\; {\rm  for\; all }\; s\in S\}.
  $$
   Clearly, $C_L(S) \in \CF (L/K)$.  In  \cite{AnGaloisTh-NORMAL-Fields}, it is proved that $C_L(S)\subseteq L^S$.

\begin{definition} 
(\cite{AnGaloisTh-NORMAL-Fields}) A Lie subalgebra $\CG$ of the Lie algebra $\CD (L/K)_+$ is called {\bf balanced} if  
$C_L(\CG) = L^\CG$. The set of balanced Lie subalgebras  of $\CD (L/K)_+$ is denoted by $\CL (L/K)$.  
A Lie algebra $\CG \in \CL (L/K)$ is called a {\bf dominant Lie algebra} if $\CG'\subseteq \CG$ 
for all Lie algebras $\CG' \in \CL (L/K)$  such that $L^{\CG'}=L^\CG$. Let $\mL (L/K)$ be the set  of dominant Lie algebras and 
$$
\mL G(L/K):=\{ (\CG, H)\in \mL (L/K)\times \CG (G(L/K))\, | \, h\CG h^{-1}=\CG\;\; {\rm  for\; all\; elements}\;\; h\in H\}.
$$  An element $(\CG, H)\in \mL G(L/K)$ is called a {\bf dom-group} or a {\bf dominant pair} and the set $\mL G(L/K)$ is called the {\bf dom-group set} or the 
{\bf set of dominant pairs} of the field extension $L/K$.
\end{definition}

 A dominant Lie algebra and a dom-group are two of the key concepts  in the analogue of the Galois correspondence for {\em normal} finite  field extensions (\cite{AnGaloisTh-NORMAL-Fields}, Theorem \ref{1Jun25}). The concept of  a dominant Lie algebra is one  of the key concepts  in the analogue of the Galois correspondence for {\em purely inseparable}   finite  field extensions (\cite{AnGaloisTh-NORMAL-Fields}, Theorem \ref{D24Mar25}).

In the Galois theory,  the two Galois Correspondences are given in terms of the fields $L^H$ of $H$-{\em invariants} where $H$ is a subgroup of the Galois group $G(L/K)$. But for normal field extensions,  analogues of the two Galois Correspondences are given in terms of the fields  $L^H\cap L^{\CG}$ of $H$-invariants and $\CG$-{\em invariants/constants} where $\CG$ is a dominant Lie algebra, \cite{AnGaloisTh-NORMAL-Fields} (Theorem  \ref{1Jun25} and  Theorem  \ref{11Jun25}). 
   
%*** In the Galois theory,  the two Galois Correspondences are given in terms of the field $L^{G(L/K)}$ of $G(L/K)$-{\em invariants}. But for normal field extensions,  analogues of the two Galois Correspondences are given in terms of the field  
%$$L^{G(L/K)}\cap L^{\CD (L/K)_+}$$
% of common  $G(L/K)$-invariants and $\CD (L/K)_+$-{\em invariants/constants} where $\CD (L/K)_+$ is the left ideal of $\CD (L/K)$ that contains precisely differential operators with zero  `constant term', i.e. that annihilate the identity element 1 of the field $L$ (Theorem  \ref{1Jun25} and  Theorem  \ref{11Jun25}).  ***

As a particular case, we obtain  a  Galois Theory for purely inseparable finite  field extensions $L/K$ where the algebra  of differential operators $\CD(L/K)$ or the dominant Lie algebra  $\CD(L/K)_+$ plays the role of the Galois group $G(L/K)$ in the Galois case (which is the identity group in this case) and $G(L/K)$-invariants are replaced by $\CD (L/K)_+$-invariants/constants.  In  \cite{AnGaloisTh-NORMAL-Fields}, explicit descriptions of the subalgebras $\CD (L/K)$, $L\rtimes G(L/K)$ and $\CD (L/K)\rtimes G(L/K)$ of $E(L/K)$ are given and their properties are studied. These are the technical core of the paper \cite{AnGaloisTh-NORMAL-Fields} on which the analogue of the Galois Theory for normal   fields builds on.\\

{\bf Analogue of the  Galois Theory for   finite field extensions.}  Notice that for a  normal finite field extension $L/K=L^{pi}\t L^{gal}$  and a field $M\in \CF (L/K)$, the following  maps are  isomorphisms of Galois groups:
\begin{eqnarray*}
G(L/K)&\ra & G(L/L^{pi}), \;\; \;\; g\mapsto g,\\
 G(L/L^{pi})&\ra & G(L^{gal}/K), \;\;g\mapsto g|_{L^{gal}}: L^{gal}\ra L^{gal},\\
  G(L/ML^{pi})&\ra & G(L/M), \;\;\;\;\;\; g\mapsto g.
\end{eqnarray*}
Therefore, we may identify the Galois groups above: $ G(L/K)=G(L/L^{pi})=G(L^{gal}/K)$ and  $G(L/M)=G(L/ML^{pi})$.

\begin{definition} 
Let $F/K$ be a finite field extensions and $L/K$ be its normal closure.  Let 
\begin{eqnarray*}
\CL (L/K,F) &:=& \{\CG \in \CL (L/K)\,  | \, \CD (L/F)_+\subseteq \CG \},\\
\mL (L/K,F) &:=&  \{\CG \in \mL (L/K)\,  | \, \CD (L/F)_+\subseteq \CG \}=\mL (L/K) \cap \CL (L/K,F),\\
\CG (G(L/K),F) &:=& \{H\in \CG (G(L/K))\, | \, G(L/F)\subseteq H\},\\
\mL G(L/K,F) &:=& \{ (\CG, H)\in \mL (L/K, F)\times \CG (G(L/K), F)\, | \, h\CG h^{-1}=\CG\;\; {\rm  for\; all\; elements}\;\; h\in H\}.
\end{eqnarray*}
The Lie algebras  of $\CL (L/K,F)$ and  $\mL (L/K,F)$ are called  {\bf $F$-balanced} and {\bf  $F$-dominant}, respectively. The pairs $(\CG , H)\in \mL G(L/K,F)$ are called   {\bf $F$-dom-groups} or {\bf $F$-dominant pairs}. The sets $\CL (L/K,F)$, $\mL (L/K,F)$ and $\mL G(L/K,F)$ 
are called the sets of  {\bf $F$-balanced, $F$-dominant} Lie algebras   and  {\bf $F$-dom-groups/$F$-dominant pairs}, respectively.
\end{definition}
These concepts play a central role in the analogue of the Galois Theory for arbitrary finite field extensions.

Let $F/K$ be a  finite field extension of arbitrary characteristic  and $L/K$ be its normal closure. Notice that $C_{E(L/K)}(F)=E(L/F)$ and $FL^{gal}/L^{gal}$ is a purely inseparable finite field extension. Let 
\begin{eqnarray*}
  \CA (E(L/K),L, F) &:=& \{A\in  \CA (E(L/K),L)\, | \, E(L/F)\subseteq A\}, \\
\mL (L/L^{gal}, F) &:=& \{\CG\in \mL (L/L^{gal})\, | \,  \CD (L/FL^{gal})_+\subseteq \CG\}\stackrel{\cite{AnGaloisTh-NORMAL-Fields}}{=}\{ \CD (L/N)_+\, | \, N\in \CF (FL^{gal}/L^{gal})\},  \\
G(L/L^{pi}, F) &:=& \{ H\in \CG (G(L/L^{pi}))\, | \,  G(L/FL^{pi})\subseteq H\}. 
\end{eqnarray*}

 \begin{theorem}\label{Arb-1Jun25}%\marginpar{Arb-1Jun25}
Let $F/K$ be a  finite field extension  and $L/K$ be its normal closure. Then:

\begin{enumerate}

\item 
\begin{eqnarray*}
\CA (E(L/K),L, F)&=&\Big\{ C_{E(L/K)}(M)
=\CD (L/M)\rtimes G(L/M)\\
&=&\CD (L/K)^M\rtimes G(L/K)^M\, | \, M\in \CF (L/K)\Big\}. 
\end{eqnarray*}
\item {\sc (Analogue of the  Galois correspondence for subfields of a  field extension)}

 The map
\begin{eqnarray*}
\CF (F/K)&\ra&  \CA (E(L/K), L, F), \;\; M\mapsto  C_{E(L/K)}(M)=\CD (L/M)\rtimes G(L/M)\\
&=&\CD (L/K)^M\rtimes G(L/K)^M
\end{eqnarray*}
is a bijection with inverse

\begin{eqnarray*}
A\mapsto C_{E(L/K)}(A)&=&C_{L}( A\cap \CD (L/K)_+)\cap L^{A\cap G(L/K)}=\Big(C_{L}( A\cap \CD (L/K)_+)) \Big)^{A\cap G(L/K)}\\
&=& L^{A\cap \CD (L/K)_+}\cap L^{A\cap G(L/K)}=\Big(L^{A\cap \CD (L/K)_+} \Big)^{A\cap G(L/K)}.\\
 \bigg( A=\CD (L/M)&\rtimes &G(L/M)\mapsto 
 L^{\CD (L/M)_+}\cap L^{G(L/M)}=\Big(L^{\CD (L/M)_+} \Big)^{G(L/M}.\bigg)
\end{eqnarray*}

\item $\mL G(L/K, F)=\Big\{ \Big(\CD (L/M)_+, G(L/M)\Big) \, | \, M\in \CF (F/K)\Big\}$ and for all fields $M\in \CF (F/K)$, 
$$
\Big(\CD (L/M)_+, G(L/M) \Big)=\Big(\CD (L/ML^{gal})_+, G(L/ML^{pi}) \Big)\in \mL (L/L^{gal})\times G(L/L^{pi}).
$$
 In particular, 
$$
\mL G(L/K,F)=\Big\{ (\CG , H)\in \mL (L/L^{gal}, F)\times \CG (G(L/L^{pi}),F)\, | \, h\CG h^{-1}=\CG\;\; {\rm  for\; all\; elements}\;\; h\in H\}.
$$
%The restriction map $G(L/L^{pi})\ra G(L^{gal}/K)$, $g\mapsto g|_{L^{gal}}: L^{gal}\ra L^{gal}$ is a group  isomorphism.

\item  {\sc (Analogue of the  Galois correspondence for subfields of a  field extension)}

 The map
$$
\CF (L/K)\ra \mL G(L/K,F),  \;\; M\mapsto  \Big(\CD (L/M)_+, G(L/M) \Big)=\Big(\CD (L/ML^{gal})_+, G(L/ML^{pi}) \Big)
$$ 
 is a bijection with inverse
$ (\CG , H)\mapsto L^\CG\cap L^H=\Big(L^\CG \Big)^H$ and $h(L^\CG)=L^\CG$ for all $h\in H$.

\item   For all fields $M\in \CF (F/K)$, $[M:K]|G(L/M)|\dim_K(\CD (L/M))=[L:K]^2$ or, equivalently, $|G(L/M)|\dim_M(\CD (L/M))=[L:M]^2$  or, equivalently, $|G(L/M)|\dim_M(\CD (L/M))=[L:M]^2$. 
\end{enumerate}

\end{theorem}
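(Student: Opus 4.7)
My plan is to reduce the statement to the analogue of the Galois Theory for the normal finite field extension $L/K$ that is established in \cite{AnGaloisTh-NORMAL-Fields}. Since $L/K$ is normal (as the normal closure of $F/K$), it is a B-extension, so $E(L/K) = \CD(L/K) \rtimes G(L/K)$. For every intermediate field $M \in \CF(L/K)$ the sub-extension $L/M$ is again normal, so I would first record the identities
$$C_{E(L/K)}(M) = \End_M(L) = \CD(L/M) \rtimes G(L/M), \quad \CD(L/M) = \CD(L/K)^M, \quad G(L/M) = G(L/K)^M,$$
together with the invariant-constant identity $L^{\CD(L/M)_+} \cap L^{G(L/M)} = M$ obtained by applying Theorem \ref{1Jun25} to $L/M$. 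Theorem \ref{11Jun25} then provides the full bijection $M \mapsto C_{E(L/K)}(M)$ between $\CF(L/K)$ and those subalgebras of $E(L/K)$ containing $L$ that arise as centralizers of intermediate fields.

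Next I would deduce Parts 1 and 2 by restricting this bijection along the inclusion $F \subseteq L$. The Double Centralizer Theorem applied to the central simple $K$-algebra $E(L/K) \simeq M_n(K)$ with simple subfield $F$ yields the equivalence
$$M \subseteq F \iff C_{E(L/K)}(F) \subseteq C_{E(L/K)}(M),$$
which exactly matches the condition $C_{E(L/K)}(M) \in \CA(E(L/K), L, F)$. Thus the bijection from the previous paragraph, restricted to $M \in \CF(F/K)$, produces both Part 1 and the forward map of Part 2. For the inverse formulas, given $A = C_{E(L/K)}(M) = \CD(L/M) \rtimes G(L/M)$, the $G(L/K)$-grading of $E(L/K) = \CD(L/K) \rtimes G(L/K)$ delivers $A \cap \CD(L/K) = \CD(L/M)$ and $A \cap G(L/K) = G(L/M)$; the identity $C_{E(L/K)}(A) = L^{A \cap \CD(L/K)_+} \cap L^{A \cap G(L/K)} = M$ then follows directly (using $L \subseteq A$ to restrict the centralizer to $L$, together with Cor.\ \ref{a1Jun25} and the decomposition $\CD(L/K) = L \oplus \CD(L/K)_+$). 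The nested-invariant forms $\bigl(L^{A \cap \CD(L/K)_+}\bigr)^{A \cap G(L/K)}$ agree with the intersection because $G(L/M)$ normalises $\CD(L/M)$ by conjugation and hence stabilises $L^{\CD(L/M)_+}$.

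Part 3 will follow immediately from the Double Centralizer dimension formula $[M:K] \cdot \dim_K C_{E(L/K)}(M) = [L:K]^2$, after substituting $\dim_K C_{E(L/K)}(M) = |G(L/M)| \cdot \dim_K \CD(L/M)$ (from the decomposition $\CD(L/M) \rtimes G(L/M) = \bigoplus_{g \in G(L/M)} \CD(L/M)\,g$) and passing between $K$-dimension and $M$-dimension via $\dim_K \CD(L/M) = [M:K]\,\dim_M \CD(L/M)$.

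The main obstacle I expect is the verification of centralizer-closure: for an arbitrary $A \in \CA(E(L/K), L, F)$, why is $A = C_{E(L/K)}(C_{E(L/K)}(A))$? This is not automatic for subalgebras of a central simple algebra. However, because $L \subseteq A$ and $L$ is a maximal commutative subfield of $E(L/K)$, the normal-extension Galois theorem of \cite{AnGaloisTh-NORMAL-Fields} applied to $L/K$ precisely identifies every such $A$ with $\CD(L/M') \rtimes G(L/M')$ for a unique $M' \in \CF(L/K)$; once that classification is invoked, centralizer-closure is automatic, and the reduction from the normal case to the arbitrary extension $F/K$ becomes a clean restriction argument.
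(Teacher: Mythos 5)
Your proposal follows essentially the same route as the paper: the paper's entire proof is to observe that $\CF(F/K)\subseteq\CF(L/K)$ and invoke the normal-extension correspondence (Theorem \ref{Gen-1Jun25}) for the normal closure $L/K$, and your Double Centralizer argument for the equivalence $M\subseteq F \iff C_{E(L/K)}(F)\subseteq C_{E(L/K)}(M)$ merely makes explicit the restriction step that the paper leaves implicit. (One cosmetic slip: the bijection on all of $\CF(L/K)$ that you attribute to Theorem \ref{11Jun25} is in fact Theorem \ref{1Jun25}/Theorem \ref{Gen-1Jun25}; Theorem \ref{11Jun25} concerns normal subfields only.)
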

 
  The proof of Theorem  \ref{Arb-1Jun25} is given in Section \ref{GEN-CASE}. \\

{\bf Normal subfields of   finite field extensions  (two approaches).} Let $F/K$ be a  finite field extension,  $L/K$ be its normal closure in $\bK$. Then $L=L^{pi}\t L^{gal}$ and $L^{gal}=L^{sep}$. Let $F^{nor}=(F/K)^{nor}$ be the largest normal field extension of $K$ that is contained in the field $F$ (it is the sum of all normal field extensions in $F$). Then $F^{nor}=F^{pi}\t F^{gal}\subseteq L$ where $F^{pi}=F\cap L^{pi}$ and $ F^{gal}=F\cap L^{gal}$.
 There are two cases: a finite field  extension $F/K$ is inseparable (Theorem \ref{Arb-11Jun25} and Theorem \ref{NEWArb-11Jun25}) or separable ( Theorem \ref{SepNEWArb-11Jun25} and Theorem \ref{GalArb-11Jun25}).  Two approaches: In each of the two cases, the corresponding pair of theorems reflects the fact that, for the finite field extension $F/K$, the analogue of Galois theory is expressed in terms of the field extensions $L/K$ and $F^{nor}$ respectively.  From the point of view of applications, it is advantageous to have both types of results, because for the field extension $F/K$ either  $L/K$ or   $F^{nor}$ may be more convenient to work with.
 \begin{definition} 
Let $F/K$ be a finite field extensions and $L/K$ be its normal closure.  Let 
\begin{eqnarray*}
\CL (L/K,F,G(L/K)) &:=& \{\CG \in \CL (L/K,F)\,  | \;  g\CG g^{-1}\;\; {\rm for\; all}\;\;g\in G(L/K) \},\\
\mL (L/K,F,G(L/K)) &:=&  \{\CG \in \mL (L/K,F)\,  | \;  g\CG g^{-1}\;\; {\rm for\; all}\;\;g\in G(L/K) \}\\
&=&\mL (L/K, F) \cap \CL (L/K, F, G(L/K)),\\
\CN (G(L/K),F) &:=& \{H\in \CN (G(L/K))\, | \, G(L/F)\subseteq H\},\\
\mL G(L/K,F, G(L/K)) &:=& \{ (\CG, H)\in \mL (L/K, F, G(L/K))\times \CN (G(L/K), F)\}\\
&=& \mL (L/K, F, G(L/K))\times \CN (G(L/K), F),\\
\CA (E(L/K),L, F, G(L/K))&:=&\{ A\in \CA (E(L/K),L, F)\, | \, g A g^{-1}= A\;\; {\rm for \; all}\;\; g\in G(L/K)\}.
\end{eqnarray*}
The  Lie algebras  of  $\CL (L/K,F,G(L/K))$ and  $\mL (L/K,F,G(L/K))$ are called  {\bf normal $F$-balanced} and {\bf normal $F$-dominant}, respectively. The pairs $(\CG, H)\in \mL G(L/K,F,G(L/K))$ 
are called   {\bf normal $F$-dom-groups} or {\bf normal $F$-dominant pairs}, respectively. The sets $\CL (L/K,F,G(L/K))$, $\mL (L/K,F,G(L/K))$ and $\mL G(L/K,F,G(L/K))$ 
are called the sets of  {\bf normal $F$-balanced, normal $F$-dominant} Lie algebras   and  {\bf normal  $F$-dom-groups/normal $F$-dominant pairs}, respectively.  
\end{definition}
These concepts are central in the  analogue of the Galois Correspondence  for {\em normal} subfields of a finite field extensions (Theorem \ref{Arb-11Jun25}). 

 Theorem  \ref{Arb-11Jun25}.(1) describes normal subfields of the finite field extension $F/K$.
 Theorem  \ref{Arb-11Jun25}.(2) gives an explicit description of the set of algebras  $\CA (E(L/K),L, F,G(L/K))$. Theorem  \ref{Arb-11Jun25}.(3) establishes an order reversing bijection between 
 normal subfields of  $F/K$ and the set of algebras $\CA (E(L/K),L, F,G(L/K))$. 
Theorem  \ref{Arb-11Jun25}.(4)  gives an explicit description of the  elements of the set $\mL G(L/K, F, G(L/K))$. 
 Theorem  \ref{Arb-11Jun25}.(5) establishes an order reversing bijection between 
 normal subfields of  $F/K$ and the set of algebras $\mL G(L/K, F, G(L/K))$. 

For the finite field extension $F/K$, let $\CN (F/K)$ be the set of normal subfields $N/K$ of $F/K$ and  $\CN(G(L/K))$ be the set of normal subgroups of the group $ G(L/K)$. 

\begin{theorem}\label{Arb-11Jun25}%\marginpar{Arb-11Jun25}
Let $F/K$ be a  finite field extension
% of characteristic $p$
 and $L/K$ be its normal closure. Then:

\begin{enumerate}

\item  For each field $M\in  \CN (F/K)$, $M=M^{pi}\t M^{gal}$  where $M^{pi}=M\cap F^{pi}
=M\cap L^{pi}
$ and $M^{gal}=M\cap F^{gal}=M\cap L^{gal}$, and 
$$\CN (F/K)=\CN (F^{nor}/K)=\CF \Big(F^{pi}/K\Big)\t \CN  \Big(F^{gal}/K\Big)
:=\Big\{ N\t \G\, | \, N\in \CF  \Big(F^{pi}/K\Big),\,  \G\in \CN  \Big(F^{gal}/K\Big)\Big\}.
$$

\item $\CA (E(L/K),L, F, G(L/K))=\Big\{\CD \Big(L^{pi}/N\Big)\t \Big(L^{gal}\rtimes G(L^{gal}/\G) \Big)= E\Big(L^{pi}/N\Big)\t  E\Big(L^{gal}/\G\Big)\, | \,$ $ N\in \CF \Big(F^{pi}/K\Big), \,  \G\in \CN  (F^{gal}/K)  \Big\}$.

\item {\sc (Analogue of the  Galois correspondence for normal  subfields of a field extension)}

The map
\begin{eqnarray*}
\CN (F/K)&\ra & \CA (E(L/K),L, F, G(L/K)), \;\; M\mapsto  C_{E(L/K)}(M)=C_{E(L^{pi}/K)}(M^{pi})\t C_{E(L^{gal}/K)}(M^{gal})\\
&=&E(L^{pi}/M^{pi})\t E(L^{gal}/M^{gal})=\CD (L^{pi}/M^{pi})\t \Big(L^{gal}\rtimes G(L^{gal}/M^{gal}) \Big)
\end{eqnarray*}
is a bijection with inverse
\begin{eqnarray*}
 A&\mapsto& C_{E(L/K)}(A)=\Big( L^{pi}\Big)^{A\cap \CD (L^{pi}/K)_+}\t \Big( L^{gal}\Big)^{A\cap G(L^{gal}/K)}.\\
 \bigg(A&=&\CD (L^{pi}/N)\t \Big(L^{gal}\rtimes G(L^{gal}/\G) \Big) \mapsto   \Big( L^{pi}\Big)^{ \CD (L^{pi}/N)_+}\t \Big( L^{gal}\Big)^{ G(L^{gal}/\G)}.\bigg)
 \end{eqnarray*}

 \item $\mL G(L/K, F, G(L/K))=\Big\{ \Big(\CD (L/M)_+, G(L/M)\Big) \, | \, M\in \CN (F/K)\Big\}$ and for all normal  fields $M\in \CN (F/K)$, 
$$
\Big(\CD (L/M)_+, G(L/M) \Big)=\Big(\CD (L/ML^{gal})_+, G(L/ML^{pi}) \Big)\in \mL (L/L^{gal})\times \CN (G(L/L^{pi})).
$$
In particular, $\mL G(L/K,F, G(L/K))=\mL (L/L^{gal}, F,G(L/K))\times \CN (G(L/L^{pi}),F)$.

\item  {\sc (Analogue of the  Galois correspondence for normal subfields of a  field extension)}

 The map
 \begin{eqnarray*}
\CN (L/K)&\ra& \mL G(L/K,F, G(L/K)) =\mL (L/K, F, G(L/K))\times \CN (G(L/K), F),\\
 M & \mapsto & \Big(\CD (L/M)_+, G(L/M) \Big)=\Big(\CD (L/ML^{gal})_+, G(L/ML^{pi}) \Big)
\end{eqnarray*}
 is a bijection with inverse
$ (\CG , H)\mapsto L^\CG\cap L^H$ and $h(L^\CG)=L^\CG$ for all $h\in H$.
 
\end{enumerate}
\end{theorem}
The proof of Theorem \ref{Arb-11Jun25} is given in Section \ref{GEN-CASE}.

Suppose that $F/K$ is  a  separable finite field extension. Notice that in zero characteristic all field extensions are separable. For separable field extensions the concepts `normal' and `Galois' coincide. Let $\CG (F/K)$ be the set of Galois=normal  intermediate subfields in $F/K$. Theorem \ref{GalArb-11Jun25}.(2) 
is the  Galois correspondence for Galois=normal  subfields of the  field extension $F/K$.

\begin{theorem}\label{GalArb-11Jun25}%\marginpar{GalArb-11Jun25}
 Let $F/K$ be a  separable finite field extension of characteristic $p$ and $L/K$ be its normal=Galois closure. Then:

\begin{enumerate}

\item $\CA (E(L/K),L, F, G(L/K))=\{L\rtimes G(L/\G) \, | \,$ $ \G\in \CG  (F/K)  \}$.

\item {\sc (The  Galois correspondence for Galois  subfields of a separable field extension)} 

The map
\begin{eqnarray*}
\CG (F/K)&\ra & \CA (E(L/K),L, F, G(L/K)), \;\; M\mapsto  C_{E(L/K)}(M)= E(L/M)\\
&=&L\rtimes G(L/M) 
\end{eqnarray*}
is a bijection with inverse
\begin{eqnarray*}
 A&\mapsto& C_{E(L/K)}(A)=  L^{A\cap G(L/K)}.\\
 \bigg(A&=&L\rtimes G(L/\G)  \mapsto   L^{ G(L/\G)}.\bigg)
 \end{eqnarray*}
 
 \item $\mL G(L/K, F, G(L/K))=\Big\{ G(L/M) \, | \, M\in \CG (F/K)\Big\}=\CN (G (L/K), F)$.

\item  {\sc (Analogue of the  Galois correspondence for Galois subfields of a  field extension)}

 The map
 $$
 \CG (F/K)\ra \CN (G (L/K), F),\;\; 
 M  \mapsto G(L/M)
 $$
 is a bijection with inverse
$ H\mapsto  L^H$. 
 
\end{enumerate}
\end{theorem}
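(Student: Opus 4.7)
\begin{proof*}[Proof proposal]
The plan is to derive Theorem \ref{GalArb-11Jun25} as the separable specialization of Theorem \ref{Arb-11Jun25}.

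First, since $F/K$ is separable, its normal closure $L/K$ is Galois (hence separable), so $L^{pi}=K$ and $L^{gal}=L$; likewise $F^{pi}=K$ and $F^{gal}=F$. Consequently, for every normal intermediate field $M$ one has $M^{pi}=M\cap L^{pi}=K$ and $M^{gal}=M\cap L^{gal}=M$, and the tensor decomposition of Theorem \ref{Arb-11Jun25}.(1) collapses to the trivial identity $M=K\otimes M$. Since in the separable setting a subfield is normal over $K$ iff it is Galois over $K$, the parameter set $\CN(F/K)=\CF(F^{pi}/K)\otimes\CN(F^{gal}/K)$ reduces to $\{K\}\otimes\CG(F/K)\cong\CG(F/K)$.

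Next, I would specialize Theorem \ref{Arb-11Jun25}.(2). A typical element of $\CA(E(L/K),L,F,G(L/K))$ has the form $\CD(L^{pi}/N)\otimes\bigl(L^{gal}\rtimes G(L^{gal}/\G)\bigr)$ with $N\in\CF(F^{pi}/K)$ and $\G\in\CN(F^{gal}/K)$. In the separable case $N$ must lie in $\CF(K/K)=\{K\}$, so $\CD(L^{pi}/N)=\CD(K/K)=K$ trivializes the first tensor factor, leaving $L\rtimes G(L/\G)$ with $\G\in\CG(F/K)$. This establishes part (1) with indexing set $\CG(F/K)$, which is forced by the requirement $C_{E(L/K)}(F)=L\rtimes G(L/F)\subseteq L\rtimes G(L/\G)$: by the classical Galois correspondence applied to $L/K$, the latter inclusion is equivalent to $\G\subseteq F$.

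For part (2), I would read off the specialized forms of the mutually inverse maps of Theorem \ref{Arb-11Jun25}.(3). The forward map reduces to $M\mapsto K\otimes E(L/M)=E(L/M)$; since $L/M$ is Galois, Dedekind's linear independence of characters together with the dimension count $|G(L/M)|\cdot[L:M]=[L:M]^2=\dim_M E(L/M)$ gives $E(L/M)=L\rtimes G(L/M)$. The inverse map has $A\cap\CD(L^{pi}/K)_+\subseteq\CD(K/K)_+=\{0\}$, so $(L^{pi})^{A\cap\CD(L^{pi}/K)_+}=K$ contributes nothing to the tensor product, and the formula collapses to $A\mapsto L^{A\cap G(L/K)}$; for $A=L\rtimes G(L/\G)$ with $\G\in\CG(F/K)$, the classical Galois correspondence for $L/K$ yields $L^{G(L/\G)}=\G$, so the two maps are mutually inverse.

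The main obstacle is only bookkeeping: once separability collapses the purely inseparable tensor factor of Theorem \ref{Arb-11Jun25} and the classical Galois correspondence for $L/K$ is invoked on the remaining Galois factor, the verification is routine.
\end{proof*}
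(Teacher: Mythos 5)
Your proof is correct and takes essentially the same route as the paper, whose entire proof is to specialize Theorem \ref{Arb-11Jun25} to the separable case (citing Theorem \ref{Gal-11Jun25} for characteristic zero); you have merely written out the collapse of the purely inseparable tensor factor explicitly. One small slip worth noting: separability of $F/K$ gives $F^{pi}=K$ but not $F^{gal}=F$ unless $F/K$ is itself Galois; this is harmless, since every Galois subfield of $F/K$ is contained in $F^{gal}$ and hence $\CN (F^{gal}/K)=\CG (F^{gal}/K)=\CG (F/K)$ in any case.
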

The proof of Theorem \ref{GalArb-11Jun25} is given in Section \ref{SEPARABLE}.

 Theorem  \ref{NEWArb-11Jun25}.(1) gives an explicit description of the set of algebras   $\CA (E(F^{nor}/K),F^{nor},  G(F^{nor}/K))$. Theorem  \ref{NEWArb-11Jun25}.(2) establishes an order reversing bijection between 
 normal subfields of  $F/K$ and the set of algebras $\CA (E(F^{nor}/K),F^{nor},  G(F^{nor}/K))$.  Theorem  \ref{NEWArb-11Jun25}.(3) establishes an order reversing bijection between 
 normal subfields of  $F/K$ and the elements of the set  $\mL (F^{pi}/K)\times \CN (G(F^{gal}/K))$.

 \begin{theorem}\label{NEWArb-11Jun25}%\marginpar{NEWArb-11Jun25}
Let $F/K$ be a  finite field extension of characteristic $p$. Then:

\begin{enumerate}

%\item For each field $M\in  \CN (F/K)$, $M=M^{pi}\t M^{gal}$  where $M^{pi}=M\cap F^{pi}
%=M\cap L^{pi}
%$ and $M^{gal}=M\cap F^{gal}=M\cap L^{gal}$, %and 
%$$\CN (F/K)=\CN (F^{nor}/K)=\CF \Big(F^{pi}/K%\Big)\t \CN  \Big(F^{gal}/K\Big)
%:=\Big\{ N\t \G\, | \, N\in \CF  \Big(F^{pi}/K\Big),\,  \G\in \CN  \Big(F^{gal}/K\Big)\Big\}.
%$$

\item $\CA (E(F^{nor}/K),F^{nor},  G(F^{nor}/K))=\Big\{\CD \Big(F^{pi}/N\Big)\t \Big(F^{gal}\rtimes G(F^{gal}/\G ) \Big)= E\Big(F^{pi}/N\Big)\t E\Big(F^{gal}/\G \Big)\, | \,$ $ N\in \CF \Big(F^{pi}/K\Big),\,  \G\in \CN  (F^{gal}/K)  \Big\}$.

%\item $\CA (E(L/K),L, G(L/K))=\Big\{\CD (L^{pi}/N)\t \Big(L^{gal}\rtimes G(L^{gal}/\G) \Big)= E(L^{pi}/N)\t \Big(L^{gal}\rtimes G(L^{gal}/M) \Big)\, | \,$ $ N\in \CF (L^{pi}/K),\,  \G\in \CN  (L^{gal}/K)  \Big\}$.

\item {\sc (Analogue of the  Galois correspondence for normal  subfields of an inseparable  finite field extension)} 

The map
\begin{eqnarray*}
\CN (F/K)&\ra & \CA (E(F^{nor}/K),F^{nor},  G(F^{nor}/K)), \\
 M &\mapsto & C_{E(F^{nor}/K)}(M)=C_{E(F^{pi}/K)}(M^{pi})\t C_{E(F^{gal}/K)}(M^{gal})\\
&=&E(F^{pi}/M^{pi})\t E(F^{gal}/M^{gal})\\
&=&\CD (F^{pi}/M^{pi})\t \Big(F^{gal}\rtimes G(F^{gal}/M^{gal}) \Big)
\end{eqnarray*}
is a bijection with inverse
\begin{eqnarray*}
 A&\mapsto& C_{E(F^{nor}/K)}(A)=\Big( F^{pi}\Big)^{A\cap \CD (F^{pi}/K)_+}\t \Big( F^{gal}\Big)^{A\cap G(F^{gal}/K)}.\\
 \bigg(A&=&\CD (F^{pi}/N)\t \Big(F^{gal}\rtimes G(F^{gal}/\G) \Big) \mapsto   \Big( F^{pi}\Big)^{ \CD (F^{pi}/N)_+}\t \Big( F^{gal}\Big)^{ G(F^{gal}/\G)}.\bigg)
\end{eqnarray*}

%$$ A\mapsto C_{E(L/K)}(A)=\Big( L^{pi}\Big)^{A\cap \CD (L^{pi}/K)_+}\t \Big( L^{gal}\Big)^{A\cap G(L^{gal}/K)}.
%$$

\item {\sc (Analogue of the  Galois correspondence for normal  subfields of an inseparable  finite field extension)} 

The map
$$
\CN (F/K)\ra \mL (F^{pi}/K)\times \CN (G(F^{gal}/K)), \;\; M=M^{pi}\t M^{gal}\mapsto \Big(\CD (F^{pi}/M^{pi})_+, G(F^{gal}/M^{gal})\Big)
$$
 is a bijection with inverse $(\CG, H)\mapsto \Big(F^{pi} \Big)^\CG\t \Big( F^{gal}\Big)^H$.

\end{enumerate}
\end{theorem}
 
The proof of Theorem \ref{NEWArb-11Jun25}   is given in Section \ref{GEN-CASE}.

 \begin{theorem}\label{SepNEWArb-11Jun25}%\marginpar{SepNEWArb-11Jun25}
Let $F/K$ be a separable finite field extension. Then:

\begin{enumerate}

\item $F^{nor}=F^{gal}$ and $\CN (F/K)=\CN (F^{gal}/K)=\CG  (F^{gal}/K)$. 

\item $\CA (E(F^{gal}/K),F^{gal},  G(F^{gal}/K))=\Big\{ F^{gal}\rtimes G(F^{gal}/\G) =  E(F^{gal}/\G) \, | \,$ $ \G\in \CN  (F^{gal}/K)  \Big\}$.

%\item $\CA (E(L/K),L, G(L/K))=\Big\{\CD (L^{pi}/N)\t \Big(L^{gal}\rtimes G(L^{gal}/\G) \Big)= E(L^{pi}/N)\t \Big(L^{gal}\rtimes G(L^{gal}/M) \Big)\, | \,$ $ N\in \CF (L^{pi}/K),\,  \G\in \CN  (L^{gal}/K)  \Big\}$.

\item {\sc (Analogue of the  Galois correspondence for normal  subfields of a separable  finite field extension)} 

The map
\begin{eqnarray*}
\CN (F/K)&\ra & \CA (E(F^{gal}/K),F^{gal},  G(F^{gal}/K)), \\
 M &\mapsto & C_{E(F^{gal}/K)}(M)= E(F^{gal}/M)=F^{gal}\rtimes G(F^{gal}/M)
\end{eqnarray*}
is a bijection with inverse
\begin{eqnarray*}
 A&\mapsto& C_{E(F^{gal}/K)}(A)= \Big( F^{gal}\Big)^{A\cap G(F^{gal}/K)}.\\
 \bigg(A&=&F^{gal}\rtimes G(F^{gal}/\G)  \mapsto   \Big( F^{gal}\Big)^{ G(F^{gal}/\G)}.\bigg)
\end{eqnarray*}

%$$ A\mapsto C_{E(L/K)}(A)=\Big( L^{pi}\Big)^{A\cap \CD (L^{pi}/K)_+}\t \Big( L^{gal}\Big)^{A\cap G(L^{gal}/K)}.
%$$

\item {\sc (Analogue of the  Galois correspondence for normal  subfields of a separable  finite field extension)} 

The map
$$
\CN (F/K)\ra \CN (G(F^{gal}/K)), \;\; M\mapsto G(F^{gal}/M)
$$
 is a bijection with inverse $H\mapsto \Big( F^{gal}\Big)^H$.

\end{enumerate}
\end{theorem}
 The proof of Theorem \ref{SepNEWArb-11Jun25}   is given in Section \ref{GEN-CASE}.

 The paper is organized as follows: At the beginning of Section \ref{GEN-CASE}, we recall some results of the papers  \cite{GaloisTh-RingThAp} and \cite{AnGaloisTh-NORMAL-Fields} that are used in the proofs of the paper. The proofs of Theorem  \ref{Arb-1Jun25},  Theorem  \ref{Arb-11Jun25} and Theorem \ref{NEWArb-11Jun25} are given.
% ***   DELETE Theorem \ref{Gen-1Jun25} is an   analogue of the  Galois correspondence for subfields of a {\em normal}  field extension. ***
  
  In Section \ref{SEPARABLE}, an analogue of the  Galois correspondence for subfields of a {\em separable} finite  field extension is given (Corollary \ref{Sep-Arb-1Jun25}.(2)). Theorem \ref{Gal-11Jun25}.(3) is the  Galois correspondence for Galois  subfields of a separable finite field extension.
  
 In Section  \ref{PURELY-INSEP}, an analogue of the Galois Theory for  {\em purely inseparable} finite field extensions is presented. In this section,  $K$ is a field of characteristic $p$ and $L/K$ is a purely inseparable   finite field extension. Theorem \ref{C24Mar25} provides an explicit description of the algebra of differential operators  $\CD (L/K)$. Theorem \ref{D24Mar25} is an analogue of the Galois Theory for purely inseparable field extensions that establishes a bijection between subfields and the algebras  of   differential operators.  
 All the results of this section are proved in \cite{AnGaloisTh-NORMAL-Fields} and they are included for the sake of completeness (as every subfield of a  purely inseparable finite field extension is also  a  purely inseparable finite  field extension).
 
  In Section \ref{EXAMPLE},  we consider  an example  of the Galois correspondences in Theorem \ref{Arb-1Jun25}.(2) and Theorem \ref{Arb-11Jun25}.(2), see Proposition \ref{XA7Jul25}.  It also provides an example of a normal finite field extension in prime characteristic such that  all subfields are normal. 
For a normal finite field extension $L/K$,  Proposition \ref{Ext-A12Apr25}   provides explicit descriptions of the subfields $L^{pi}$ and  $L^{gal}$ of $L/K$ and the automorphism group $G(L/K)$.

%%%%%%%%%%%%%%%%%% SECTION 2 %%%%%%%%%%%%%%%%%%%%%

\section{Analogue of the Galois Theory for   finite field extensions} \label{GEN-CASE} %\marginpar{GEN-CASE}

 In this section, the proofs of Theorem  \ref{Arb-1Jun25},  Theorem  \ref{Arb-11Jun25} and Theorem \ref{NEWArb-11Jun25}  are given.  At the beginning of the section we recall some results of the papers  \cite{GaloisTh-RingThAp} and \cite{AnGaloisTh-NORMAL-Fields} that are used in the proofs. \\

{\bf B-extensions, D-extensions and G-extensions of fields.}  Given a $K$-algebra $A$, a group $\CG$ and a group homomorphism $\phi: \CG\ra \Aut_K(A)$, $g\mapsto \phi_g$, then a direct sum of $A$-modules,
$\CA := \bigoplus_{g\in \CG}Ag$,  has a  $K$-algebra structure that is given by the rule: For all elements $a,b\in A$ and $g,h\in \CG$, 
$$ ag\cdot bh=a\phi_g(b)gh.$$
The algebra $\CA$ is called a {\em skew group algebra}.

For a finite field extension $L/K$, the algebra $E(L/K)$ contains the group $G(L/K)$ and the algebra $\CD (L/K)$ of differential operators on the $K$-algebra $L$. A $K$-subalgebra of $E(L/K)$ which generated by the group $G(L/K)$ and the algebra $\CD (L/K)$ is the skew group algebra 
$$
\CD (L/K)\rtimes G(L/K)=\bigoplus_{g\in G(L/K)}\CD (L/K)g\subseteq E(L/K),\;\;  \cite[{\rm Theorem}\; 5.15]{AnGaloisTh-NORMAL-Fields}.
$$ 
 \begin{definition}
(\cite{AnGaloisTh-NORMAL-Fields}) A finite field extension $L/K$ is called a {\bf B-extension} if 
 $$E(L/K)=\CD (L/K)\rtimes G(L/K).$$
 \end{definition}
 The notation `B' abbreviates ‘Bi’ = ‘2’: both fundamental structures associated with the field extension $L/K$ - the algebra of differential operators $\CD (L/K)$ and the automorphism group $G(L/K)$ - are essential to the definition. 
 By the definition,  a field  extension $L/K$ is a B-extension iff the algebra $\CD (L/K)\rtimes G(L/K)$ is as large as possible. So, B-extensions are the most symmetrical field extensions. 
 \begin{definition}
 (\cite{AnGaloisTh-NORMAL-Fields}) A finite field extension $L/K$ is called a {\bf G-extension} if 
 $$E(L/K)=L\rtimes G(L/K).$$
 A finite field extension $L/K$ is called a {\bf D-extension} if 
 $$E(L/K)=\CD(L/K).$$
 \end{definition}
Above, `G' and `D' stand for `the Galois group' and `the algebra of differential operators', respectively. By the definition, G- and D-extensions are B-extensions. There are many B-extensions that are neither G- nor D-extensions. They are precisely normal field extensions that are neither Galois nor purely inseparable field extensions, \cite[{\rm Corollary}\;   6.7]{AnGaloisTh-NORMAL-Fields}.

\begin{proposition}\label{A8May25}%\marginpar{A8May25}
(\cite{AnGaloisTh-NORMAL-Fields})  A finite field extension  is a G-extension iff it  is a Galois field extension.
\end{proposition}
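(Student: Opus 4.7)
The plan is to prove both directions by a $K$-dimension count inside $E(L/K)$, combined with two classical ingredients: Artin's theorem on the fixed field of a finite automorphism group for the forward direction, and Dedekind's lemma on the linear independence of characters for the reverse direction. Throughout, I would write $n := [L:K]$, so that $\dim_K E(L/K) = n^2$ and, by construction of the skew group algebra, $\dim_K\big(L\rtimes G(L/K)\big) = n \cdot |G(L/K)|$ (the skew group algebra is free as a left $L$-module on $G(L/K)$).

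For the forward direction, I would assume $E(L/K) = L\rtimes G(L/K)$ and equate $K$-dimensions to obtain $|G(L/K)| = n$. Artin's theorem then gives $[L : L^{G(L/K)}] = |G(L/K)| = [L:K]$, forcing $L^{G(L/K)} = K$. By definition, this means $L/K$ is Galois.

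For the converse, I would assume $L/K$ is Galois, so $|G(L/K)| = n$ and therefore $\dim_K\big(L\rtimes G(L/K)\big) = n^2 = \dim_K E(L/K)$. The remaining task is to show that the natural $K$-algebra homomorphism
\[
\mu : L\rtimes G(L/K) \longrightarrow E(L/K), \qquad l g \longmapsto \big(x \mapsto l\cdot g(x)\big),
\]
is injective; then the dimension equality forces surjectivity, giving $E(L/K) = L\rtimes G(L/K)$. Injectivity of $\mu$ is precisely Dedekind's linear independence of characters: distinct $K$-algebra automorphisms of $L$ are $L$-linearly independent as functions $L\to L$, which says exactly that $\sum_{g\in G(L/K)} l_g\, g = 0$ in $E(L/K)$ implies every $l_g = 0$.

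The main obstacle, modest as it is, lies in the reverse direction and is packaged entirely in Dedekind's lemma; once that is invoked the argument collapses to the dimension count. No new machinery beyond Artin's theorem and Dedekind's lemma is needed, and one should also verify (trivially) that the skew group multiplication defined in the excerpt matches composition of operators in $E(L/K)$, so that $\mu$ is indeed a $K$-algebra map and the dimension count is meaningful.
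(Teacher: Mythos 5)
Your argument is correct, but it follows a different route from the one the paper relies on. The paper does not prove Proposition \ref{A8May25} here at all: it imports it from \cite{AnGaloisTh-NORMAL-Fields}, whose announced method (see the Introduction) is ring-theoretic, deducing such statements from the Double Centralizer Theorem applied to the central simple algebra $E(L/K)\simeq M_n(K)$ and its subfields, rather than from the classical pair Artin--Dedekind that you use. Your proof is the elementary one: Dedekind's independence of characters gives injectivity of $\mu: L\rtimes G(L/K)\to E(L/K)$, so the subalgebra of $E(L/K)$ generated by $L$ and $G(L/K)$ has $K$-dimension $n\,|G(L/K)|$, and the two directions reduce to the dimension count plus Artin's theorem $[L:L^{G}]=|G|$. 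Two small remarks. First, the injectivity of $\mu$ (equivalently, the directness of $\sum_{g}Lg$ inside $E(L/K)$) is needed in \emph{both} directions, not only in the converse: without it the hypothesis $E(L/K)=L\rtimes G(L/K)$ does not yield $|G(L/K)|=n$ from a dimension count; this is harmless since the paper already records the decomposition $\CD(L/K)\rtimes G(L/K)=\bigoplus_{g\in G(L/K)}\CD(L/K)g\subseteq E(L/K)$, which contains the directness you need. Second, you switch between the characterizations ``$L^{G(L/K)}=K$'' and ``$|G(L/K)|=[L:K]$'' of Galois extensions; these are equivalent precisely by Artin's theorem, which you invoke anyway, so this is consistent. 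What the paper's approach buys is uniformity: the same centralizer/dimension identities in the central simple algebra $E(L/K)$ treat the Galois, purely inseparable and general normal cases on an equal footing; what your approach buys is brevity and complete independence from the machinery of \cite{AnGaloisTh-NORMAL-Fields}.
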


\begin{corollary}\label{VVB-a4May25}%\marginpar{VVB-a4May25}
 (\cite{AnGaloisTh-NORMAL-Fields}) A finite field extension is a D-extension iff it is a purely inseparable field extension.
\end{corollary}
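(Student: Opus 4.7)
The plan is to prove both directions using the characterization of B-extensions as normal finite field extensions (from \cite{AnGaloisTh-NORMAL-Fields}), together with the internal direct sum structure of the skew group algebra.

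For the forward direction, assume $L/K$ is purely inseparable. Such an extension is easily seen to be normal with trivial automorphism group: each $\alpha \in L$ satisfies $\alpha^{p^n} \in K$ for some $n$, so the minimal polynomial of $\alpha$ over $K$ divides $x^{p^n} - \alpha^{p^n} = (x-\alpha)^{p^n}$ and hence splits over $L$ with $\alpha$ as its unique root, showing that $L/K$ is normal and that any $\sigma \in G(L/K)$ must fix $\alpha$. Since $L/K$ is normal it is a B-extension, and therefore
$$ E(L/K) = \CD(L/K) \rtimes G(L/K) = \CD(L/K) \rtimes \{1\} = \CD(L/K),$$
so $L/K$ is a D-extension.

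For the converse, assume $E(L/K) = \CD(L/K)$. Then $G(L/K) \subseteq E(L/K) = \CD(L/K)$, and by Theorem \ref{AB17Apr25}(1) the $K$-subalgebra of $E(L/K)$ generated by $\CD(L/K)$ and $G(L/K)$ is precisely the skew group algebra $\CD(L/K) \rtimes G(L/K)$. This subalgebra is therefore squeezed between $\CD(L/K)$ and $E(L/K) = \CD(L/K)$, giving
$$ \CD(L/K) \rtimes G(L/K) = E(L/K).$$
Hence $L/K$ is a B-extension, and in particular a normal finite field extension. The next step is to extract the equation $G(L/K) = \{1\}$ by exploiting the internal direct sum decomposition $\CD(L/K) \rtimes G(L/K) = \bigoplus_{g \in G(L/K)} \CD(L/K) \cdot g$ inside $E(L/K)$: since the whole direct sum equals $\CD(L/K) = \CD(L/K) \cdot 1$ and each summand $\CD(L/K) \cdot g$ is nonzero, the index set must be a singleton, forcing $G(L/K) = \{1\}$. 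Finally, invoking the tensor decomposition $L = L^{pi} \otimes_K L^{gal}$ available for any normal finite field extension, together with the identification $G(L/K) \cong G(L^{gal}/K)$, triviality of $G(L/K)$ gives $L^{gal} = K$, whence $L = L^{pi}$ is purely inseparable.

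The only delicate point is the direct-sum collapse argument, which rests on the linear independence in $E(L/K)$ of the cosets $\CD(L/K) \cdot g$ for distinct $g \in G(L/K)$ — a Dedekind-style independence property packaged into Theorem \ref{AB17Apr25}(1). Apart from that, both implications reduce to a routine combination of the B-extension $=$ normal extension characterization with the standard tensor-product structure of normal finite field extensions.
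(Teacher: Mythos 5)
Your argument is logically valid given the results this paper imports from \cite{AnGaloisTh-NORMAL-Fields}, but note that the paper states this corollary purely as a citation and gives no proof here, so the comparison must be with the route the surrounding material indicates. That route is visible in Theorem \ref{C24Mar25}.(1): the forward direction (purely inseparable $\Rightarrow$ D-extension) is obtained in the source by an \emph{explicit description} of $\CD (L/K)$ for a purely inseparable extension, showing directly that $\dim_K\CD (L/K)=[L:K]^2$ and hence $\CD (L/K)=E(L/K)$. You instead derive it from the theorem ``normal $\Rightarrow$ B-extension'' plus triviality of $G(L/K)$. This is a clean deduction, but it is almost certainly circular relative to the source's actual development: the proof that normal extensions are B-extensions rests on the decomposition $L=L^{pi}\otimes_K L^{gal}$ (Lemma \ref{16Nov25}) together with $E(L^{pi}/K)=\CD (L^{pi}/K)$ for the purely inseparable tensor factor --- i.e.\ on the very statement being proved. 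So your forward direction is fine as a consistency check inside this paper, where the B $=$ normal theorem is a black box, but it would not stand as an independent proof. Your converse is good and essentially forced: $E=\CD$ gives $G\subseteq \CD$, the direct-sum structure $\bigoplus_{g\in G}\CD (L/K)g$ of the generated subalgebra forces $|G|=1$ by a dimension count, normality then gives $L=L^{pi}\otimes_K L^{gal}$ with $G(L/K)\cong G(L^{gal}/K)$ (Proposition \ref{Ext-A12Apr25}.(5)), and a Galois extension with trivial group is trivial, so $L=L^{pi}$. The only thing worth adding there is one sentence for characteristic $0$, where the quicker route is Proposition \ref{VB-aC24Mar25}: $\CD (L/K)=L$, so $E=\CD$ forces $[L:K]^2=[L:K]$ and $L=K$, which is (vacuously) purely inseparable.
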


\begin{proposition}\label{VB-aC24Mar25}%\marginpar{VB-aC24Mar25}
(\cite{AnGaloisTh-NORMAL-Fields})  A finite field extension $L/K$  is separable iff $\CD (L/K)=L$.
\end{proposition}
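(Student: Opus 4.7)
The plan is to combine the canonical decomposition $\CD(L/K) = L \oplus \CD(L/K)_+$ recorded in the setup with the classical fact that a finite field extension $L/K$ is separable if and only if $\Der_K(L) = 0$. The reverse direction is then immediate: if $\CD(L/K) = L$, then $\CD(L/K)_+ = 0$, and since every $K$-derivation annihilates $1$, one has $\Der_K(L) \subseteq \CD(L/K)_+ = 0$, whence $L/K$ is separable by the classical characterization.

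For the forward direction, assume $L/K$ is separable, so $\Der_K(L) = 0$. Denote by $\CD^{\leq n}$ the subspace of differential operators of order at most $n$ (Grothendieck's order filtration), and write $\CD^{\leq n}_+ := \CD^{\leq n} \cap \CD(L/K)_+$, so that $\CD^{\leq n} = L \oplus \CD^{\leq n}_+$ via $D \mapsto D(1)$. The plan is to prove by induction on $n$ that $\CD^{\leq n}_+ = 0$, and hence that $\CD(L/K) = \bigcup_n \CD^{\leq n} = L$. The base case $n = 0$ is immediate. For the inductive step, assume $\CD^{\leq n-1} = L$ and take $\delta \in \CD^{\leq n}_+$. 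By the definition of the order filtration, for each $l \in L$ the commutator $[\delta, l]$ lies in $\CD^{\leq n-1} = L$, that is, it acts as multiplication by some element of $L$; evaluating at $1$ and using $\delta(1) = 0$ identifies this element as $\delta(l)$. Applying the identity $[\delta, l](l') = \delta(l) l'$ and expanding yields the Leibniz rule $\delta(l l') = l \delta(l') + \delta(l) l'$. Combined with $K$-linearity (which gives $\delta(k) = k \delta(1) = 0$ for all $k \in K$), this shows $\delta \in \Der_K(L) = 0$, closing the induction.

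The main obstacle is purely bookkeeping in the inductive step: one must simultaneously track the order filtration, the canonical split $\CD^{\leq n} = L \oplus \CD^{\leq n}_+$, and the stability of commutators under the filtration. The underlying input $\Der_K(L) = 0$ for finite separable $L/K$ is entirely classical and can be established directly from the primitive element theorem: writing $L = K(\alpha)$ with separable minimal polynomial $f$, any $d \in \Der_K(L)$ satisfies $0 = d(f(\alpha)) = f'(\alpha) d(\alpha)$ with $f'(\alpha) \neq 0$, forcing $d(\alpha) = 0$ and hence $d = 0$.
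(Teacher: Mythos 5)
Your argument is correct, and it is essentially the route the paper intends: the paper gives no proof of Proposition \ref{VB-aC24Mar25} here, deferring to \cite{AnGaloisTh-NORMAL-Fields}, but the expanded form of the same proposition in Section \ref{SEPARABLE} lists $\Der (L/K)=0$ and $\O_{L/K}=0$ as intermediate equivalent conditions, which is exactly the reduction you make. Your induction on the order filtration --- showing that once $\CD^{\leq n-1}=L$, every $\delta\in\CD^{\leq n}$ with $\delta(1)=0$ satisfies the Leibniz rule and hence lies in $\Der_K(L)=0$ --- is the standard way to see that the whole filtration collapses to $L$, and the bookkeeping (the commutator $[\delta,l]$ lies in $\CD^{\leq n-1}=L$, hence acts as multiplication by an element of $L$ which evaluation at $1$ identifies as $\delta(l)$) is carried out correctly. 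The one point you invoke but do not prove is the implication $\Der_K(L)=0\Rightarrow L/K$ separable, which is what the direction ``$\CD(L/K)=L$ implies separable'' actually rests on; your primitive-element computation establishes only the converse. This missing half is classical (in characteristic $p$, if $L\neq KL^p$ then $L/KL^p$ is a nontrivial purely inseparable extension of exponent one and therefore carries a nonzero $KL^p$-derivation, a fortiori a nonzero $K$-derivation), but a sentence to that effect is needed to close the loop.
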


 So, intuitively, purely inseparable field extensions in prime characteristic are direct analogue of Galois field extensions  in the separable situation where the algebras  $E(L/K)=\CD(L/K)$ and $E(L/K)=L\rtimes G(L/K)$ are `swapped', i.e. the skew group algebra $L\rtimes G(L/K)$ is replaced by the algebras of differential operators $\CD(L/K)$ (automorphisms are replaced by differential operators). The general situation for B-extensions (in prime characteristic), is a mixture of two cases: the Galois and the purely inseparable ones.\\

 {\bf Analogue of the Galois Theory for normal field extensions.}  Theorem  \ref{1Jun25} is one of the main results of the paper. For normal  field extensions, it establishes an analogue of the Galois correspondence.

\begin{definition} 
For a finite field extension $L/K$ and its intermediate subfield $K\subseteq M\subseteq L$, let
\begin{eqnarray*}
\CD (L/K)^M &:=& \{ \d \in \CD (L/K)\, | \, \d m=m\d\;\; {\rm for\; all}\;\; m\in M\}=C_{\CD (L/K)}(M),\\
 G(L/K)^M&:=& \{ g\in G(L/K)\, | \, g(m)=m\;\; {\rm for\; all}\;\; m\in M \}=C_{G(L/K)}(M).
\end{eqnarray*}
\end{definition} 
 By the definitions, $\CD (L/K)^M$ is a subalgebra of the algebra $\CD (L/K)$ that contains $L$ and $G(L/K)^M$ is a subgroup of the group $G(L/K)$.

\begin{corollary}\label{a1Jun25}%\marginpar{a1Jun25}
(\cite{AnGaloisTh-NORMAL-Fields}) 
Let $L/K$ be a normal finite field extension of characteristic $p$. Then for an algebra $A\in  \CA (E(L/K),L)$,
$$
C_{L}( A\cap \CD (L/K)) = C_{L}( A\cap \CD (L/K)_+) = L^{A\cap \CD (L/K)_+}.
$$
\end{corollary}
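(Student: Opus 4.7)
The plan is to prove the chain $C_L(A\cap\CD(L/K)) = C_L(A\cap\CD(L/K)_+) = L^{A\cap\CD(L/K)_+}$ by handling each equality separately. Write $B := A\cap\CD(L/K)$ and $B_+ := A\cap\CD(L/K)_+$ for brevity.

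The first equality $C_L(B) = C_L(B_+)$ is essentially formal. The $K$-linear decomposition $\CD(L/K) = L\oplus\CD(L/K)_+$, whose projection onto the first summand is $\d\mapsto\d(1)$, combined with the hypothesis $L\subseteq A$, yields $B = L\oplus B_+$. Since $L$ is commutative, every $l\in L$ automatically centralizes the summand $L$, so centralizing $B$ is equivalent to centralizing $B_+$. For the second equality, the inclusion $C_L(B_+)\subseteq L^{B_+}$ is immediate: if $l\d = \d l$ in $\End_K(L)$ and $\d\in B_+$ (so $\d(1)=0$), then evaluating at $1\in L$ gives $\d(l) = l\d(1) = 0$, so $l\in L^{B_+}$.

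The substantive direction $L^{B_+}\subseteq C_L(B_+)$ is where the normality hypothesis is essential, and my plan is to reduce it to the structural identification (established earlier in \cite{AnGaloisTh-NORMAL-Fields}) that, for normal $L/K$, every $K$-subalgebra $B$ of $\CD(L/K)$ containing $L$ has the form $\CD(L/M)$ for the uniquely determined intermediate subfield $M := L^{B_+}$. Granted this, $\CD(L/M)$ centralizes $M$ in $E(L/K)$ by construction, and the self-centralizing property $C_L(\CD(L/M)) = M$ (no element of $L\setminus M$ is annihilated by every $M$-linear differential operator on $L$) yields $C_L(B_+) = C_L(B) = M = L^{B_+}$. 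The main obstacle, and the sole place where normality is genuinely used, is the structural identification $B = \CD(L/L^{B_+})$; it rests on the tensor factorization $\CD(L/K) = \CD(L^{pi}/K)\otimes_K L^{sep}$ with $L^{sep}$ central in $\CD(L/K)$, a feature available precisely because $L/K$ is normal (equivalently, a B-extension).
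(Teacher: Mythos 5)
The paper does not actually prove Corollary \ref{a1Jun25}: it is quoted from \cite{AnGaloisTh-NORMAL-Fields} without proof, so there is no in-paper argument to compare against. Your reconstruction is correct and is consistent with the machinery the paper imports. The two formal steps are fine: $B=L\oplus B_+$ via $\d\mapsto \d(1)$ (using $L\subseteq A$) gives $C_L(B)=C_L(B_+)$, and evaluation at $1$ gives $C_L(B_+)\subseteq L^{B_+}$. You also correctly locate the substantive content in the converse inclusion and reduce it to the classification of subalgebras $B$ of $\CD(L/K)$ with $L\subseteq B$ as $B=\CD(L/M)=E(L/L^{gal}M)$ with $M=C_L(B)=L^{B_+}$; this is exactly what Lemma \ref{d1Jun25}, Theorem \ref{D24Mar25} and Corollary \ref{B23Mar25} provide, resting on $\CD(L/K)=E(L/L^{sep})$ being central simple over $L^{sep}$, which is where normality enters. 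One small imprecision: your parenthetical justification of $C_L(\CD(L/M))=M$ ("no element of $L\setminus M$ is annihilated by every $M$-linear operator") describes $L^{\CD(L/M)_+}=M$ rather than the centralizer statement; the clean argument is that an element of $C_L(\CD(L/M))$ lies in $C_{E(L/M)}(E(L/M))=Z(E(L/M))=M$, after which your already-established inclusion $C_L\subseteq L^{(\cdot)_+}$ closes the loop.
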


\begin{definition}
For a finite field extension $L/K$ and an algebra $A\in  \CA (E(L/K),L)$,  let
\begin{eqnarray*}
C_{L}( A\cap \CD (L/K)_+) &:=& \{ l \in L\, | \, \d l=l\d\;\; {\rm for\; all}\;\; \d\in A\cap \CD (L/K)_+\}\\
&\stackrel{{\rm Cor.}\, \ref{a1Jun25} }{=}& L^{A\cap \CD (L/K)_+} :=
\{ l \in L\, | \, \d (l)=0\;\; {\rm for\; all}\;\; \d\in A\cap \CD (L/K)_+\},\\
 L^{A\cap G(L/K)}&:=& \{  l \in L\, | \, g(l)=l\;\; {\rm for\; all}\;\; g\in A\cap G(L/K)\}=C_{L}( A\cap G (L/K)).
\end{eqnarray*}
\end{definition} 
 By the definition, the sets $L^{A\cap \CD (L/K)_+}$ and $L^{A\cap G(L/K)}$ are subfields of $L$ that contain the field $K$ and the intersection $A\cap G(L/K)$ is a subgroup of $G(L/K)$.

\begin{lemma}\label{c1Jun25}%\marginpar{c1Jun25}
Let $L/K$ be a finite field extension and $M\in \CF (L/K)$. Then:
\begin{enumerate}

\item  $L\subseteq E(L/M)\subseteq E(L/K)$ and 
$\CD (L/K)^M=\CD (L/M) =E(L/M)\cap \CD (L/K)\subseteq  E(L/K)$.

\item If, in addition, the field extension $L/K$ is normal then  $G(L/M)=G(L/K)^M$.

\end{enumerate}

\end{lemma}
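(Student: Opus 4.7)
The plan is to unwind the definitions using only the inductive description of $\CD(-/-)$ via the order filtration together with the inclusions $K\subseteq M\subseteq L$. No deep ingredient is needed: both parts of the lemma are essentially bookkeeping consequences of the definitions.

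For part (1), I first note the inclusions $L\subseteq E(L/M)\subseteq E(L/K)$: the left map sends each $l\in L$ to left multiplication, which is automatically $M$-linear, and $M$-linearity is stronger than $K$-linearity. Next, I identify $\CD(L/K)\cap E(L/M)$ with $\CD(L/M)$ by invoking the Grothendieck filtration: $\CD(L/K)$ is built inductively by $\CD^0=L$ and $\CD^n=\{d\in\End_K(L)\,|\,[a,d]\in\CD^{n-1}\text{ for all }a\in L\}$, and the same recursion, carried out inside $\End_M(L)$, defines $\CD(L/M)$. Since the recursion only involves commutators against elements of $L$ (and never mentions $K$), an element of $\CD(L/K)$ that happens to be $M$-linear lies automatically in $\CD(L/M)$, and conversely any element of $\CD(L/M)$ is in $\CD(L/K)\cap E(L/M)$. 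Finally, to identify $\CD(L/K)^M=C_{\CD(L/K)}(M)$ with $\CD(L/K)\cap E(L/M)$, I use that for $d\in\CD(L/K)$ the relation $dm=md$ for every $m\in M$ (with $m$ acting by multiplication on $L$) is precisely the condition of $M$-linearity of $d$; combining the two identifications gives $\CD(L/K)^M=\CD(L/M)$.

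For part (2), I give a direct double inclusion. Any $g\in G(L/M)$ is by definition a ring automorphism of $L$ fixing $M$ pointwise; since $K\subseteq M$, such a $g$ also fixes $K$, so $g\in G(L/K)$, and the condition $g|_M=\id_M$ places $g$ in $G(L/K)^M$. Conversely, any $g\in G(L/K)^M$ is a $K$-algebra automorphism of $L$ that fixes $M$; combining $K$-linearity with $g|_M=\id_M$ forces $g$ to be $M$-linear, hence an $M$-algebra automorphism, i.e.\ an element of $G(L/M)$.

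The only mildly subtle point is the interplay between the three descriptions of $\CD(L/M)$ appearing in part (1), and this is resolved by the filtration argument above. I note that normality of $L/K$ plays no essential role in the proof of part (2); the hypothesis seems superfluous for the stated equality and is presumably included only for consistency with the standing hypotheses of the surrounding results in this section.
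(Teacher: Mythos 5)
Your proof is correct and takes essentially the same route as the paper's. For part (1) the paper's proof consists of the single sentence that the identifications ``follow from the definition of the algebra of differential operators'' together with the inclusions $L\subseteq E(L/M)\subseteq E(L/K)$; your induction on the order filtration ($\CD^0=L$ and $\CD^n=\{d\in\End_K(L)\, | \, [d,a]\in\CD^{n-1}\;{\rm for\; all}\; a\in L\}$) is precisely the bookkeeping that the paper leaves implicit, and your identification of $M$-linearity of $d$ with the commutation relations $dm=md$ for all $m\in M$ is the same observation as the paper's ``Clearly, $E(L/M)\cap \CD (L/K)=\CD (L/K)^M$.'' The one genuine divergence is in part (2): the paper's proof invokes the normality of $L/K$ and $L/M$ together with the inclusion $G(L/M)\subseteq G(L/K)$, whereas you observe --- correctly --- that the set equality $G(L/M)=G(L/K)^M$ needs only $K\subseteq M$, since both sides consist of exactly the ring automorphisms of $L$ that fix $M$ pointwise (and fixing $M$ pointwise already forces fixing $K$). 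So your argument for (2) is marginally more general than the paper's; normality is relevant to the surrounding results, where the size of $G(L/M)$ and the relation $L^{G(L/M)}=M$ matter, but not to this particular identification.
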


\begin{proof} 1.  It follows from the definition of the algebra of differential operators and the inclusions
$L\subseteq E(L/M)\subseteq E(L/K)$ that 
$\CD (L/M) =E(L/M)\cap \CD (L/K)\subseteq  E(L/K)$. Clearly, $E(L/M)\cap \CD (L/K)=\CD (L/K)^M$.

2. Statement 2 follows from the normality of the field extensions $L/K$ and $L/M$ and the inclusion $G(L/M)\subseteq G(L/K)$. 
\end{proof}

 For normal field extensions, Theorem \ref{1Jun25} gives an order reversing  Galois-type correspondence for their subfields.

 \begin{theorem}\label{1Jun25}%\marginpar{1Jun25}
(\cite{AnGaloisTh-NORMAL-Fields}) Let $L/K$ be a normal finite field extension of characteristic $p$. Then:

\begin{enumerate}

\item $ \CA (E(L/K),L)=\{ C_{E(L/K)}(M)=\CD (L/M)\rtimes G(L/M)=\CD (L/K)^M\rtimes G(L/K)^M\, | \, M\in \CF (L/K)\}$. 

\item  {\sc (Analogue of the  Galois correspondence for subfields of a normal  field extension)}

The map
$$
\CF (L/K)\ra \CA (E(L/K),L), \;\; M\mapsto  C_{E(L/K)}(M)=\CD (L/M)\rtimes G(L/M)=\CD (L/K)^M\rtimes G(L/K)^M
$$ is a bijection with inverse
\begin{eqnarray*}
A\mapsto C_{E(L/K)}(A)&=&C_{L}( A\cap \CD (L/K)_+)\cap L^{A\cap G(L/K)}=\Big(C_{L}( A\cap \CD (L/K)_+)) \Big)^{A\cap G(L/K)}\\
&=& L^{A\cap \CD (L/K)_+}\cap L^{A\cap G(L/K)}=\Big(L^{A\cap \CD (L/K)_+} \Big)^{A\cap G(L/K)}.\\
 \bigg( A=\CD (L/M)&\rtimes &G(L/M)\mapsto 
 L^{\CD (L/M)_+}\cap L^{G(L/M)}=\Big(L^{\CD (L/M)_+} \Big)^{G(L/M}.\bigg)
\end{eqnarray*}
%$$ A\mapsto C_{E(L/K)}(A)=L^{A\cap \CD (L/K)_+}\cap L^{A\cap G(L/K)}=\Big(L^{A\cap \CD (L/K)_+} \Big)^{A\cap G(L/K)}
%$$

\item $\mL G(L/K)=\Big\{ \Big(\CD (L/M)_+, G(L/M)\Big) \, | \, M\in \CF (L/K)\Big\}$ and for all fields $M\in \CF (L/K)$, 
$$
\Big(\CD (L/M)_+, G(L/M) \Big)=\Big(\CD (L/ML^{gal})_+, G(L/ML^{pi}) \Big)\in \mL (L/L^{gal})\times G(L/L^{pi}).
$$
 In particular, 
$$
\mL G(L/K)=\Big\{ (\CG , H)\in \mL (L/L^{gal})\times \CG (G(L/L^{pi}))\, | \, h\CG h^{-1}=\CG\;\; {\rm  for\; all\; elements}\;\; h\in H\}.
$$
%The restriction map $G(L/L^{pi})\ra G(L^{gal}/K)$, $g\mapsto g|_{L^{gal}}: L^{gal}\ra L^{gal}$ is a group  isomorphism.

\item  {\sc (Analogue of the  Galois correspondence for subfields of a normal  field extension)}

 The map
$$
\CF (L/K)\ra \mL G(L/K),  \;\; M\mapsto  \Big(\CD (L/M)_+, G(L/M) \Big)=\Big(\CD (L/ML^{gal})_+, G(L/ML^{pi}) \Big)
$$ 
 is a bijection with inverse
$ (\CG , H)\mapsto L^\CG\cap L^H=\Big(L^\CG\Big)^H$ and $h(L^\CG)=L^\CG$ for all $h\in H$.
%The restriction map $G(L/L^{pi})\ra G(L^{gal}/K)$, $g\mapsto g|_{L^{gal}}: L^{gal}\ra L^{gal}$ is a group  isomorphism.

\item For all fields $M\in \CF (L/K)$, $[M:K]|G(L/M)|\dim_K(\CD (L/M))=[L:K]^2$  or, equivalently, $|G(L/M)|\dim_M(\CD (L/M))=[L:M]^2$.

\item The field extension $L/L^{gal}$ is a finite purely inseparable field extension and 
$\mL (L/L^{gal})=\{ \CD (L/N)_+\, | \, N\in \CF (L/L^{gal})\}$.

\end{enumerate}

\end{theorem}

Lemma \ref{d1Jun25} provides additional information on  the algebra $\CD (L/M)^K$ in Theorem \ref{1Jun25}.

\begin{lemma}\label{d1Jun25}%\marginpar{d1Jun25}
(\cite{AnGaloisTh-NORMAL-Fields})  Let $L/K$ be a normal finite field extension of characteristic $p$. Then $\CD (L/M)=\CD (L/K)^M=C_{E(L/ML^{gal})}(M)=E(L/L^{gal}M)=\CD (L/L^{gal}M)$.
\end{lemma}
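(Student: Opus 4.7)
The plan is to reduce the chain of five equalities to three ingredients already at our disposal: Lemma \ref{c1Jun25}(1), the structural decomposition $L=L^{pi}\otimes_K L^{gal}$ of a normal finite field extension in characteristic $p$, and Corollary \ref{VVB-a4May25} (pure inseparability is equivalent to being a D-extension). For brevity write $N:=L^{gal}M$.

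I would dispatch the outer equalities first. The identity $\CD(L/M)=\CD(L/K)^M$ is exactly Lemma \ref{c1Jun25}(1). The identity $C_{E(L/N)}(M)=E(L/N)$ is trivial since $M\subseteq N$, so every $N$-linear endomorphism of $L$ is automatically $M$-linear, i.e.\ commutes in $E(L/K)$ with multiplication by the elements of $M$. For $E(L/N)=\CD(L/N)$ I would first check that $L/N$ is purely inseparable: from $L=L^{pi}\cdot L^{gal}$ every element of $L$ is purely inseparable over $L^{gal}$, and since $N\supseteq L^{gal}$ the same holds over $N$; Corollary \ref{VVB-a4May25} then yields $E(L/N)=\CD(L/N)$.

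The central equality $\CD(L/M)=E(L/N)$ carries the substance. The inclusion $E(L/N)=\CD(L/N)\subseteq \CD(L/M)$ is immediate from $M\subseteq N$. For the reverse inclusion I would compare $M$-dimensions. Since $L/K$ is normal so is $L/M$, and the B-extension identity $E(L/M)=\CD(L/M)\rtimes G(L/M)$ (Theorem \ref{1Jun25}) gives $\dim_M\CD(L/M)=[L:M]^2/|G(L/M)|$, while $\dim_M E(L/N)=[N:M]\cdot[L:N]^2$. Using $[L:M]=[N:M]\cdot[L:N]$, the two dimensions agree precisely when $|G(L/M)|=[N:M]$.

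This last numerical identity is the main obstacle, and I would establish it by identifying $N=L^{gal}M$ with the maximal separable subextension $L^{\mathrm{sep},M}$ of $L/M$. Separability of $L^{gal}/K$ yields separability of $N/M$ by standard compositum theory; conversely, any $\alpha\in L$ separable over $M$ is simultaneously purely inseparable over $N$ (by the previous paragraph) and separable over $N$, forcing $\alpha\in N$. Since $L/M$ is normal, $L^{\mathrm{sep},M}/M$ is Galois and the restriction map $G(L/M)\to G(L^{\mathrm{sep},M}/M)$ is bijective (injective because $G(L/L^{\mathrm{sep},M})$ is trivial, surjective by normality of $L/M$). Therefore $|G(L/M)|=[L^{\mathrm{sep},M}:M]=[N:M]$, closing the dimension count and completing the chain of equalities.
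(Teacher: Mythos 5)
Your argument is correct, and it is worth noting that the paper itself offers no proof of Lemma \ref{d1Jun25}: the statement is simply imported from \cite{AnGaloisTh-NORMAL-Fields}, so there is no in-paper argument to compare against. What you have written is a legitimate self-contained derivation from the results that \emph{are} quoted here. The easy links are handled as one would expect: $\CD(L/M)=\CD(L/K)^M$ is Lemma \ref{c1Jun25}.(1); $C_{E(L/N)}(M)=E(L/N)$ for $N:=L^{gal}M$ is immediate from $M\subseteq N$; and $E(L/N)=\CD(L/N)$ follows from pure inseparability of $L/N$ (which you correctly extract from $L=L^{pi}\otimes L^{gal}$, i.e.\ Lemma \ref{16Nov25}) together with Corollary \ref{VVB-a4May25}. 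The substantive step is your dimension count for $\CD(L/M)=E(L/N)$: the inclusion $\CD(L/N)\subseteq\CD(L/M)$ costs nothing since the order filtration only sees commutators with elements of $L$, and the counting identity $|G(L/M)|=[N:M]$ is exactly the identification of $N=L^{gal}M$ with the maximal separable subextension of the normal extension $L/M$, with the restriction $G(L/M)\to G(N/M)$ bijective because $G(L/N)$ is trivial and $L/M$ is normal. This correctly reconciles $\dim_M\CD(L/M)=[L:M]^2/|G(L/M)|$ (Theorem \ref{1Jun25}.(3)) with $\dim_M E(L/N)=[N:M][L:N]^2$. An alternative, slightly more structural route would be to get the opposite inclusion $\CD(L/M)\subseteq E(L/N)$ directly from $L_{dif}=L^{sep}$ (Theorem \ref{23May25}), which says every differential operator is $L^{gal}$-linear; but your counting argument is complete as it stands.
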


%Proposition \ref{A8May25} is a criterion for a finite field extension being a Galois field extensions.

%\begin{proposition}\label{A8May25}%\marginpar{A8May25}
%A finite field extension $L/K$ is a Galois field extension iff $L\rtimes G(L/K)=E(L/K)$.
%\end{proposition}

%\begin{proof}
%A finite field extension $L/K$ is a Galois field extension iff $G(L/K)=[L:K]$ iff 
%$$
%\dim_K(L\rtimes G(L/K))=[L:K]^2 \;\; (=\dim_K(E(L/K))
%$$ 
%iff 
% $L\rtimes G(L/K)=E(L/K)$ since $L\rtimes G(L/K)\subseteq E(L/K)$.
%\end{proof}

For a finite  field extension $L/K$, let  $\CG (G)$ be the set of subgroups of the group  $G$. Theorem  \ref{Gal-1Jun25}.(3) is one of the main results of the classical Galois Theory.

 \begin{theorem}\label{Gal-1Jun25}%\marginpar{Gal-1Jun25}
(\cite{GaloisTh-RingThAp})  Let $L/K$ be a G-extension (i.e. a Galois extension, by Proposition \ref{A8May25}). Then:

\begin{enumerate}

\item $ \CA (E,L)=\{ C_{E}(M)=L\rtimes G(L/M)=L\rtimes G^M\, | \, L\in \CF (L/K)\}$. 

\item The map
$$
\CF (L/K)\ra \CA (E,L), \;\; M\mapsto  C_{E}(M)=L\rtimes G(L/M)=L\rtimes G^M
$$ 
is a bijection with inverse
$$
A\mapsto C_{E}(A)=L^{A\cap G}.
$$
\item {\sc (The  Galois correspondence for subfields of a Galois  field extension)}

The map
$$
\CF (L/K)\ra \CG (G), \;\; M\mapsto  G(L/M)=G^M
$$ 
is a bijection with inverse $
H\mapsto L^H.$

\end{enumerate}
\end{theorem}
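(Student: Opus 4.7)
The plan is to reduce the theorem to two ingredients living inside the central simple algebra $E=E(L/K)\cong M_n(K)$ with $n=[L:K]$: (a) the description of $E$ as the skew group algebra $L\rtimes G$, supplied by the Galois hypothesis via Proposition~\ref{A8May25}, and (b) the Double Centralizer Theorem (DCT) applied to the simple (field) subalgebras of $E$ that arise. A dimension count in $E=L\rtimes G$ forces $|G|=[L:K]$ as a formal consequence.

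The structural backbone is the following lemma: every $A\in\CA(E,L)$ has the form $A=L\rtimes H$ for the unique subgroup $H=A\cap G$ of $G$. Indeed, $L\subseteq A$ makes $A$ an $(L,L)$-sub-bimodule of $E$, while $E=\bigoplus_{g\in G}Lg$ is a decomposition into pairwise non-isomorphic simple $(L,L)$-bimodules (each $Lg$ is one-dimensional with right $L$-action twisted by the automorphism $g$, and an $(L,L)$-bilinear isomorphism $Lg\to Lh$ would force $g=h$). Uniqueness of the isotypic decomposition therefore gives $A=\bigoplus_{g\in H}Lg$ with $H=\{g\in G:g\in A\}=A\cap G$, and closure of $A$ under multiplication forces $H$ to be a subgroup of $G$. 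This already produces the bijection $\CA(E,L)\leftrightarrow\CG(G)$, $A\leftrightarrow A\cap G$, with inverse $H\mapsto L\rtimes H$.

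Next I compute the centralizers that link (1) and (2) to the Galois side. First, $C_E(L)=L$: writing $x=\sum_g l_g g$ and imposing $xl=lx$ for every $l\in L$ gives $l_g(g(l)-l)=0$, so for $g\ne 1$ one can choose $l\in L$ with $g(l)\ne l$, forcing $l_g=0$. For $M\in\CF(L/K)$, the extension $L/M$ is Galois, so Proposition~\ref{A8May25} applied to $L/M$ yields $\End_M(L)=L\rtimes G(L/M)$; since $\End_M(L)=C_E(M)$ by definition, this gives $C_E(M)=L\rtimes G(L/M)=L\rtimes G^M$. Conversely, for $A=L\rtimes H$, a direct computation using $gl=g(l)g$ yields $C_E(A)=C_E(L)\cap C_E(H)=L\cap C_E(H)=L^H$. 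Now DCT applied to the simple subalgebra $M\supseteq K=Z(E)$ gives $C_E(C_E(M))=M$, i.e., $L^{G(L/M)}=M$, as a free byproduct. Parts (1) and (2) are then immediate: $M\mapsto C_E(M)$ is the bijection $\CF(L/K)\to\CA(E,L)$ with inverse $A\mapsto C_E(A)=L^{A\cap G}$.

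Part (3) is the composition of (2) with the bijection from the lemma: $M\mapsto C_E(M)=L\rtimes G(L/M)\mapsto G(L/M)$, with inverse $H\mapsto L\rtimes H\mapsto C_E(L\rtimes H)=L^H$. The required identity $G(L/L^H)=H$ then follows from combining Artin's inequality $[L:L^H]\le|H|$ with the DCT dimension count $|G(L/L^H)|=[L:L^H]$ (obtained from $[L^H:K]\cdot\dim_K C_E(L^H)=[L:K]^2$), together with the obvious inclusion $H\subseteq G(L/L^H)$. The main obstacle is the bimodule lemma of the second paragraph --- the observation that $L\subseteq A$ already forces $A$ to be `group-like', which is what allows the whole Galois correspondence to be read off from $(L,L)$-bimodule isotypic uniqueness --- after which DCT mechanically handles the rest, and in particular the classical identity $L^{G(L/M)}=M$ emerges as a consequence of DCT rather than being assumed.
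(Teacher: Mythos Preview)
Your proof is correct and follows precisely the ring-theoretic route the paper advocates: the paper does not give its own proof here (the result is quoted from \cite{GaloisTh-RingThAp}), but your ``structural backbone lemma'' is exactly Corollary~\ref{a24Mar25}.(1), and the remainder is the Double Centralizer Theorem applied to the field $M$ and to $L^H$, which is the method of \cite{GaloisTh-RingThAp} as summarized in the Introduction.

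One expository point worth tightening: the sentence ``Parts (1) and (2) are then immediate'' comes too early. At that stage you have only shown $L^{G(L/M)}=M$, which gives injectivity of $M\mapsto C_E(M)$ but not surjectivity onto $\CA(E,L)$; surjectivity (equivalently, that every $L\rtimes H$ is some $C_E(M)$) needs $G(L/L^H)=H$, which you only supply in the final paragraph via Artin's inequality and the DCT dimension count. All the ingredients are present, but the logical order should be: first establish both $L^{G(L/M)}=M$ and $G(L/L^H)=H$, and \emph{then} conclude (1), (2), (3) simultaneously. Alternatively, you could observe directly that $L\rtimes H$ is simple (it equals $\End_{L^H}(L)$ once Artin gives $[L:L^H]=|H|$) and apply DCT to $A$ to get $C_E(C_E(A))=A$, which yields surjectivity without the separate computation.
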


Theorem \ref{Gen-1Jun25} is an   analogue of the  Galois correspondence for subfields of a normal  field extension.

 \begin{theorem}\label{Gen-1Jun25}%\marginpar{Gen-1Jun25}
Let $L/K$ be a normal finite field extension. Then:

\begin{enumerate}

\item $ \CA (E(L/K),L)=\{ C_{E(L/K)}(M)=\CD (L/M)\rtimes G(L/M)=\CD (L/K)^M\rtimes G(L/K)^M\, | \, M\in \CF (L/K)\}$. 

\item  {\sc (Analogue of the  Galois correspondence for subfields of a normal  field extension)}

The map
$$
\CF (L/K)\ra \CA (E(L/K),L), \;\; M\mapsto  C_{E(L/K)}(M)=\CD (L/M)\rtimes G(L/M)=\CD (L/K)^M\rtimes G(L/K)^M
$$ is a bijection with inverse
\begin{eqnarray*}
A\mapsto C_{E(L/K)}(A)&=&C_{L}( A\cap \CD (L/K)_+)\cap L^{A\cap G(L/K)}=\Big(C_{L}( A\cap \CD (L/K)_+)) \Big)^{A\cap G(L/K)}\\
&=& L^{A\cap \CD (L/K)_+}\cap L^{A\cap G(L/K)}=\Big(L^{A\cap \CD (L/K)_+} \Big)^{A\cap G(L/K)}.\\
 \bigg( A=\CD (L/M)&\rtimes &G(L/M)\mapsto 
 L^{\CD (L/M)_+}\cap L^{G(L/M)}=\Big(L^{\CD (L/M)_+} \Big)^{G(L/M}.\bigg)
\end{eqnarray*}
%$$ A\mapsto C_{E(L/K)}(A)=L^{A\cap \CD (L/K)_+}\cap L^{A\cap G(L/K)}=\Big(L^{A\cap \CD (L/K)_+} \Big)^{A\cap G(L/K)}
%$$

\item $\mL G(L/K)=\Big\{ \Big(\CD (L/M)_+, G(L/M)\Big) \, | \, M\in \CF (L/K)\Big\}$ and for all fields $M\in \CF (L/K)$, 
$$
\Big(\CD (L/M)_+, G(L/M) \Big)=\Big(\CD (L/ML^{gal})_+, G(L/ML^{pi}) \Big)\in \mL (L/L^{gal})\times G(L/L^{pi}).
$$
 In particular, 
$$
\mL G(L/K)=\Big\{ (\CG , H)\in \mL (L/L^{gal})\times \CG (G(L/L^{pi}))\, | \, h\CG h^{-1}=\CG\;\; {\rm  for\; all\; elements}\;\; h\in H\}.
$$
%The restriction map $G(L/L^{pi})\ra G(L^{gal}/K)$, $g\mapsto g|_{L^{gal}}: L^{gal}\ra L^{gal}$ is a group  isomorphism.

\item  {\sc (Analogue of the  Galois correspondence for subfields of a normal  field extension)}

 The map
$$
\CF (L/K)\ra \mL G(L/K),  \;\; M\mapsto  \Big(\CD (L/M)_+, G(L/M) \Big)=\Big(\CD (L/ML^{gal})_+, G(L/ML^{pi}) \Big)
$$ 
 is a bijection with inverse
$ (\CG , H)\mapsto L^\CG\cap L^H=\Big(L^\CG\Big)^H$ and $h(L^\CG)=L^\CG$ for all $h\in H$.
%The restriction map $G(L/L^{pi})\ra G(L^{gal}/K)$, $g\mapsto g|_{L^{gal}}: L^{gal}\ra L^{gal}$ is a group  isomorphism.

\item For all fields $M\in \CF (L/K)$, $[M:K]|G(L/M)|\dim_K(\CD (L/M))=[L:K]^2$  or, equivalently, $|G(L/M)|\dim_M(\CD (L/M))=[L:M]^2$.

\item The field extension $L/L^{gal}$ is a finite purely inseparable field extension and 
$\mL (L/L^{gal})=\{ \CD (L/N)_+\, | \, N\in \CF (L/L^{gal})\}$.

\end{enumerate}
\end{theorem}

\begin{proof} The theorem follows from Theorem \ref{1Jun25} ($\char (K)=p$) and Theorem \ref{Gal-1Jun25} ($\char (K)=0$). If $\char (K)=0$ then $\CD (L/M)=L$ and the equality 
 \begin{eqnarray*}
[M:K]|G(L/M)|\dim_K(\CD (L/M))&=&[M:K]|G(L/M)|[L:K]=[M:K][L:M][L:K]\\
&=&[L:K][L:K]= [L:K]^2
\end{eqnarray*}
  holds for all fields $M\in \CF (L/K)$. Now, the equality 
$$|G(L/M)|\dim_M(\CD (L/M))=[L:M]^2$$ follows from the equalities $\dim_K (\CD (L/K))=[M:K]\dim_M (\CD (L/M))$ and $[L:K]=[L:M][M:K]$:
\begin{eqnarray*}
|G(L/M)|\dim_M(\CD (L/M))&=& [M:K]^{-2}\bigg([M:K]|G(L/M)|\dim_K(\CD (L/M))\bigg)= [M:K]^{-2}[L:K]^2\\
&=&[L:M]^2. \qedhere
\end{eqnarray*}
  \end{proof}

{\bf Proof of Theorem \ref{Arb-1Jun25}.} Inclusions of fields $L\supseteq F\supseteq M\supseteq K$ yield reverse  inclusions of their centralizers: $C_E(L)\subseteq C_E(F)\subseteq C_E(M)\subseteq C_E(K)$  where $E=E(L/K)$. By   Theorem \ref{Gen-1Jun25}.(1),
  $$
  L\subseteq \CD (L/F)\rtimes G(L/F)\subseteq \CD (L/M)\rtimes G(L/M) \subseteq \CD (L/K)\rtimes G(L/K).
  $$ 
 Therefore, $\CD (L/F)\subseteq \CD (L/M)\subseteq \CD (L/K)$, 
 $$
   \CD (L/F)_+\subseteq \CD (L/M)_+\subseteq \CD (L/K)_+\;\; {\rm and}\;\; G(L/F)\subseteq G(L/M) \subseteq G(L/K).
  $$
Clearly, $\CF (M/K)\subseteq  \CF (L/K)$. Now, the theorem  follows from Theorem \ref{Gen-1Jun25} by  simply  inserting `$F$' in the notation there, reflecting  the inclusions $
   \CD (L/F)_+\subseteq \CD (L/M)_+$  and $G(L/F)\subseteq G(L/M)$.
   
   \hfill $\square$\\

{\bf Normal subfields of   normal finite field extensions.} Let $ \CA (L\rtimes G(L/K), L)$ be the set of subalgebras of the algebra $L\rtimes G(L/K)$ that contain the field $L$. A subalgebra $A$ of $L\rtimes G(L/K)$ is called  $G(L/K){\bf-stable}$ if 
$$
gAg^{-1}=A\;\; {\rm  for\; all\; elements}\;\;g\in G(L/K).
$$ 
Let $ \CA (L\rtimes G(L/K), L, G(L/K))$ be the set of $G(L/K)$-stable subalgebras of the algebra $L\rtimes G(L/K)$ that contain the field $L$. Corollary \ref{a24Mar25}  describes the sets  $\CA (L\rtimes G(L/K), L)$  and $\CA (L\rtimes G(L/K), L,  G(L/K))$.

 \begin{corollary}\label{a24Mar25}%\marginpar{a24Mar25}
(\cite{AnGaloisTh-NORMAL-Fields})    Let $L/K$ be a finite field extension of  arbitrary characteristic. Then:
\begin{enumerate}

\item  $ \CA (L\rtimes G(L/K), L)=\{L\rtimes H\, | \, H$ is a  subgroup of $G(L/K) \}$.

\item  $ \CA (L\rtimes G(L/K),L,  G(L/K))=\{L\rtimes N\, | \, N$ is a normal subgroup of $G(L/K) \}$.

\end{enumerate}
\end{corollary}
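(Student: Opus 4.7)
\begin{proof*}[Proof plan]
\textbf{Part 1.} Given $A \in \CA(L \rtimes G(L/K), L)$, set $H := A \cap G(L/K)$. Then $H$ is clearly a subgroup of $G(L/K)$ (closed under products and inverses because $A$ is a subalgebra and each $g \in G$ is a unit in $L \rtimes G$). Since $L \subseteq A$ and $H \subseteq A$, we have $L \rtimes H \subseteq A$. The substantive content is the reverse inclusion, and this is where I would spend the bulk of the proof.

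The plan for $A \subseteq L \rtimes H$ is the standard ``Dedekind linear independence'' elimination argument adapted to the skew group algebra. Let $x = \sum_{i=1}^{n} a_i g_i \in A$ be written in the canonical form, with $g_1, \ldots, g_n \in G(L/K)$ pairwise distinct and $a_i \in L \setminus \{0\}$. I claim each $g_i$ lies in $A$; since then $a_i g_i = g_i \cdot g_i^{-1}(a_i) \cdot$ [noting $a_i \in L \subseteq A$] works out via the product rule, and inductively $x - a_n g_n \in A$ has strictly smaller support, placing $x$ in $L \rtimes H$. Key computation: for $l \in L$ and any $c \in L$,
\begin{equation*}
x l - c x = \sum_{i=1}^{n} a_i \bigl(g_i(l) - c\bigr) g_i \in A
\end{equation*}
(using $g_i \cdot l = g_i(l) \cdot g_i$). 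To isolate $g_n$, I would iteratively eliminate $g_1, \ldots, g_{n-1}$: for each $i < n$, the distinctness of $g_i$ and $g_n$ provides $l_i \in L$ with $g_i(l_i) \neq g_n(l_i)$; setting $c_i := g_i(l_i)$ kills the $g_i$-term while the $g_n$-coefficient is multiplied by the \emph{nonzero} scalar $g_n(l_i) - g_i(l_i) \in L^*$. Some other terms may collapse along the way, but the $g_n$-component survives with a nonzero coefficient at each step. After $n-1$ such reductions, the remaining element of $A$ is a nonzero $L$-multiple of $g_n$, and since $L \subseteq A$ and $L$ is a field, $g_n \in A$. Thus $g_n \in H$, and by symmetry $g_i \in H$ for all $i$, proving $A = L \rtimes H$.

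\textbf{Part 2.} Given Part 1, it only remains to identify which subgroups $H$ produce $G(L/K)$-stable subalgebras. Writing $G := G(L/K)$, note that for $g \in G$, $a \in L$, $h \in H$, one computes $g (a h) g^{-1} = g(a) \cdot (g h g^{-1})$ in $L \rtimes G$. Since $g(a) \in L$, conjugation stabilizes $L \rtimes H$ iff $g H g^{-1} \subseteq H$ for all $g \in G$, i.e.\ iff $H$ is a normal subgroup of $G$. This gives the bijection in Part 2 and finishes the proof. The main obstacle is the elimination argument in Part 1; everything else is a bookkeeping consequence of the skew-product multiplication rule.
\end{proof*}
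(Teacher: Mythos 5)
Your proposal is correct. Note that this paper does not actually prove Corollary \ref{a24Mar25} --- it is quoted verbatim from \cite{AnGaloisTh-NORMAL-Fields} with no argument reproduced here --- so there is no in-paper proof to compare against; but the route you take (the Dedekind/Artin linear-independence elimination on the support of an element of $A$, using $xl-cx=\sum_i a_i(g_i(l)-c)g_i$ to strip off one group element at a time while keeping the $g_n$-coefficient nonzero, then dividing by the surviving unit of $L$ to land $g_n$ in $A$) is the standard and essentially unavoidable argument for Part 1, and the conjugation computation $g(ah)g^{-1}=g(a)\,(ghg^{-1})$ is exactly what Part 2 needs. One small repair: your reason that $H=A\cap G(L/K)$ is closed under inverses (``each $g$ is a unit in $L\rtimes G$'') does not suffice, since a subalgebra need not contain inverses of its units; the correct justification is that $G(L/K)$ is finite (as $L/K$ is a finite extension), so $g^{-1}=g^{m-1}\in A$ where $m$ is the order of $g$. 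With that fix the proof is complete.
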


Theorem  \ref{11Jun25} describes normal sunfields of normal finite field extensions. Also it establishes an order reversing bijection between 
 normal subfields of a  normal finite field extension $L/K$ and the set $\CA (E(L/K),L, G(L/K))$ of $G(L/K)$-stable subalgebras of the algebra $E(L/K)$ that contains the field $L$. 
For a finite field extension $L/K$, let $\CN (L/K)$ be the set of normal subfields $N/K$ of $L/K$ and 
 $\CN(G(L/K))$ be the set of normal subgroups of the group $ G(L/K)$.

  \begin{theorem}\label{11Jun25}%\marginpar{11Jun25}
  (\cite{AnGaloisTh-NORMAL-Fields}) 
Let $L/K$ be a normal finite field extension of characteristic $p$. Then:

\begin{enumerate}

\item For each field $M\in  \CN (L/K)$, $M=M^{pi}\t M^{gal}$, where $M^{pi}=M\cap L^{pi}
$ and $M^{gal}:=M\cap L^{gal}$, and 
$$
\CN (L/K)=\CF (L^{pi}/K)\t \CN (L^{gal}/K)
:=\{ N\t \G\, | \, N\in \CF (L^{pi}/K),\,  \G\in \CN (L^{gal}/K)\}.
$$
\item $\CA (E(L/K),L, G(L/K))=\Big\{\CD (L^{pi}/N)\t \Big(L^{gal}\rtimes G(L^{gal}/\G ) \Big)= E(L^{pi}/N)\t L^{gal}/ \G) \Big)\, | \,$ $ N\in \CF (L^{pi}/K),\,  \G\in \CN  (L^{gal}/K)  \Big\}$.

\item {\sc (Analogue of the  Galois correspondence for normal  subfields of a normal  field extension)} 

The map
\begin{eqnarray*}
\CN (L/K)&\ra & \CA (E(L/K),L, G(L/K)), \;\; M\mapsto  C_{E(L/K)}(M)=C_{E(L^{pi}/K)}(M^{pi})\t C_{E(L^{gal}/K)}(M^{gal})\\
&=&E(L^{pi}/M^{pi})\t E(L^{gal}/M^{gal})=\CD (L^{pi}/M^{pi})\t \Big(L^{gal}\rtimes G(L^{gal}/M^{gal}) \Big)
\end{eqnarray*}
is a bijection with inverse
\begin{eqnarray*}
 A&\mapsto& C_{E(L/K)}(A)=\Big( L^{pi}\Big)^{A\cap \CD (L^{pi}/K)_+}\t \Big( L^{gal}\Big)^{A\cap G(L^{gal}/K)}.\\
 \bigg(A&=&\CD (L^{pi}/N)\t \Big(L^{gal}\rtimes G(L^{gal}/\G) \Big) \mapsto   \Big( L^{pi}\Big)^{ \CD (L^{pi}/N)_+}\t \Big( L^{gal}\Big)^{ G(L^{gal}/\G)}.\bigg)
\end{eqnarray*}

%$$ A\mapsto C_{E(L/K)}(A)=\Big( L^{pi}\Big)^{A\cap \CD (L^{pi}/K)_+}\t \Big( L^{gal}\Big)^{A\cap G(L^{gal}/K)}.

\item {\sc (Analogue of the  Galois correspondence for normal  subfields of a normal  field extension)} 

The map
$$
\CN (L/K)\ra \mL (L^{pi}/K)\times \CN (G(L^{gal}/K)), \;\; M=M^{pi}\t M^{gal}\mapsto \Big(\CD (L^{pi}/M^{pi})_+, G(L^{gal}/M^{gal})\Big)
$$
 is a bijection with inverse $(\CG, H)\mapsto \Big(L^{pi} \Big)^\CG\t \Big( L^{gal}\Big)^H$.

\end{enumerate}
\end{theorem}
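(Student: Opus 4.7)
The plan is to reduce Theorem \ref{11Jun25} to Theorem \ref{1Jun25} by exploiting the structural fact that a normal finite field extension $L/K$ in characteristic $p$ decomposes as $L = L^{pi}\otimes_K L^{gal}$, together with the observation that every $K$-automorphism of $L$ acts trivially on $L^{pi}$ (purely inseparable elements have only one conjugate). This gives $G(L/K) = G(L^{gal}/K)$ and, via Corollary \ref{VVB-a4May25} and Proposition \ref{A8May25}, an identification
$$E(L/K) = E(L^{pi}/K)\otimes_K E(L^{gal}/K) = \CD(L^{pi}/K)\otimes_K \bigl(L^{gal}\rtimes G(L^{gal}/K)\bigr).$$

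For part (1), I would set $M^{pi}:=M\cap L^{pi}$ and $M^{gal}:=M\cap L^{gal}$ for $M\in\CN(L/K)$. A normal finite field extension is the tensor product over $K$ of its purely inseparable part and its maximal separable subfield, and for a normal $M$ the maximal separable subfield is itself Galois and coincides with $M\cap L^{gal}$; hence $M = M^{pi}\otimes_K M^{gal}$. Conversely, any pair $(N,\Gamma)\in\CF(L^{pi}/K)\times\CN(L^{gal}/K)$ produces a normal subfield $N\otimes_K \Gamma$ of $L/K$: $N$ is normal because every purely inseparable finite extension is normal, $\Gamma$ is normal by hypothesis, and the compositum of two normal subfields of $L/K$ is normal.

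For part (2), let $A\in\CA(E(L/K),L,G(L/K))$. By Theorem \ref{1Jun25}.(1) there is a unique $M\in\CF(L/K)$ with $A = \CD(L/M)\rtimes G(L/M) = C_{E(L/K)}(M)$. The $G(L/K)$-stability of $A$, combined with the uniqueness in the bijection of Theorem \ref{1Jun25}.(2), forces $g(M)=M$ for all $g\in G(L/K)$, i.e.\ $M\in\CN(L/K)$; conversely, if $M$ is $G(L/K)$-stable then $g\CD(L/M)g^{-1}=\CD(L/g(M))=\CD(L/M)$ and $gG(L/M)g^{-1}=G(L/g(M))=G(L/M)$, giving $gAg^{-1}=A$. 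Using the decomposition $M=M^{pi}\otimes_K M^{gal}$ from (1), I would then verify
$$G(L/M)=G(L^{gal}/M^{gal}),\qquad \CD(L/M)=\CD(L^{pi}/M^{pi})\otimes_K L^{gal},$$
the first identity coming from $G(L/K)=G(L^{gal}/K)$ and the second from splitting differential operators along the tensor factorisation of $L/M$, using Proposition \ref{VB-aC24Mar25} to kill any genuine differential contribution from the separable factor. Thus
$$A=\bigl(\CD(L^{pi}/M^{pi})\otimes_K L^{gal}\bigr)\rtimes G(L^{gal}/M^{gal})=\CD(L^{pi}/M^{pi})\otimes_K\bigl(L^{gal}\rtimes G(L^{gal}/M^{gal})\bigr),$$
and $\CD(L^{pi}/M^{pi})=E(L^{pi}/M^{pi})$ by Corollary \ref{VVB-a4May25}.

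Part (3) then follows by restricting the bijection of Theorem \ref{1Jun25}.(2) to the subsets identified in (1) and (2), and rewriting the inverse map via the tensor decomposition: since $L=L^{pi}\otimes_K L^{gal}$, the invariants $L^{A\cap\CD(L/K)_+}$ and $L^{A\cap G(L/K)}$ factor across the two tensor components, cutting out $N$ on the $L^{pi}$ side and $\Gamma$ on the $L^{gal}$ side. The main obstacle I expect is the clean verification that $\CD(L/M)$ splits as $\CD(L^{pi}/M^{pi})\otimes_K L^{gal}$ for normal $M$; I would establish the inclusion $\supseteq$ directly from the tensor structure of $L/M$ and then match $K$-dimensions by means of the identity $|G(L/M)|\dim_K\CD(L/M)=[L:K]^2/[M:K]$ from Theorem \ref{1Jun25}.(3), applied in parallel to $L/M$, $L^{pi}/M^{pi}$, and $L^{gal}/M^{gal}$, where the second identity trivialises because $L^{gal}/M^{gal}$ is Galois and the third is purely inseparable.
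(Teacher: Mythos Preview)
The paper does not contain a proof of Theorem \ref{11Jun25}; it is quoted verbatim from the companion paper \cite{AnGaloisTh-NORMAL-Fields} and used here only as input to the one-line proof of Theorem \ref{Arb-11Jun25}. So there is no proof in this paper to compare your proposal against.

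That said, your outline is internally coherent and is almost certainly the intended argument. The reduction to Theorem \ref{1Jun25} via the tensor splitting $L=L^{pi}\otimes_K L^{gal}$ is the natural route, and your key step in part (2)---that $G(L/K)$-stability of $A=C_{E(L/K)}(M)$ forces $g(M)=M$ via the uniqueness in Theorem \ref{1Jun25}.(2)---is correct. One point deserves a sentence of justification that you glossed: the equivalence between ``$g(M)=M$ for all $g\in G(L/K)$'' and ``$M/K$ is normal'' relies on the normality of $L/K$, which guarantees that every $K$-embedding of $M$ into $\bK$ is the restriction of some element of $G(L/K)$. Your dimension-count strategy for $\CD(L/M)=\CD(L^{pi}/M^{pi})\otimes_K L^{gal}$ via Theorem \ref{1Jun25}.(3) is workable; alternatively one can appeal directly to Lemma \ref{d1Jun25}, which gives $\CD(L/M)=E(L/L^{gal}M)=E(L/L^{gal}M^{pi})$ and then factor the endomorphism algebra along $L=L^{pi}\otimes_K L^{gal}$.
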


{\bf Proof of Theorem \ref{Arb-11Jun25}.}  
1.  For each field $M\in  \CN (F/K)$, $M=M^{pi}\t M^{gal}$. It follows from the inclusions 
 $$
 M\subseteq F^{nor}=F^{pi}\t F^{gal}\subseteq L=L^{pi}\t L^{gal}
 $$
  that 
  $M^{pi}=M\cap F^{pi}
=M\cap L^{pi}
$ and $M^{gal}=M\cap F^{gal}=M\cap L^{gal}$. Then 
$$\CN (F/K)=\CN (F^{nor}/K)=\CF \Big(F^{pi}/K\Big)\t \CN  \Big(F^{gal}/K\Big)
:=\Big\{ N\t \G\, | \, N\in \CF  \Big(F^{pi}/K\Big),\,  \G\in \CN  \Big(F^{gal}/K\Big)\Big\}.
$$
 
2--5. Notice that 
$$\CA (E(L/K),L, F, G(L/K))=\{ A\in \, \CA (E(L/K),L,  G(L/K)) | \, E(L/F)\subseteq A\}. 
$$
Suppose that $\char (K)=p$. Now, statements 2--5 follows from statements 2--5 of Theorem \ref{11Jun25}, respectively, simply inserting `$F$' in the notation there reflecting the inclusion $M\subseteq F$.

Suppose that $\char (K)=0$. Then $L^{pi}=K$ and $\CD (L^{pi}/K)=\CD (K/K)=K$, and the theorem reduces  to Theorem  \ref{Gal-11Jun25} which is essentially the Galois Theorem for Galois subfields of a Galois finite  field extension.
 \hfill $\square$\\

{\bf Proof of Theorem \ref{NEWArb-11Jun25}. }  
  Statements 1--3 follow from the equality $\CN (F/K)=\CN (F^{nor}/K)$  and statements 2--4 of Theorem \ref{11Jun25}, respectively. \hfill $\square$\\

{\bf Proof of Theorem \ref{SepNEWArb-11Jun25}.}  
 Suppose that $L/K$ is a separable finite field extension and  so the theorem reduces  to Theorem  \ref{Gal-11Jun25}.
 % which is essentially the Galois Theorem for Galois subfields of a Galois finite  field extension.
\hfill $\square$

%*****   DELETE or USE BITS ***********

%Let $E=E(L/K)$ and $\CA := \CA (E(L/K),L, F, G(L/K))$.

%(i) $\CA = \{ C_E(M)=\CD (L/M)\rtimes G(L/M)\, | \, M\in \CN (F/K)\}$: Suppose that $A\in \CA$, i.e. 
%$$
%L\subseteq A,\;\; E(L/F)\subseteq A\;\; {\rm and}\;\; gAg^{-1}=A\;\; {\rm for\; all}\;\; g\in G(L/K).
%$$
%Since $A\in \CA (E, L)$, by Theorem \ref{Arb-1Jun25}.(1), $M:=C_E(A)\in \CF (L/K)$ and 
%$$
%A=C_E(M)=\CD (L/M)\rtimes G(L/M).
%$$ 
%Therefore, by Theorem \ref{Arb-1Jun25}.(1), $gAg^{-1}=A$ for all automorphisms $g\in G(L/K)$ iff $gC_E(A)g^{-1}=C_E(A)$ for all automorphisms $g\in G(L/K)$ iff 
%$$
%g(M)=gMg^{-1}=M\;\; {\rm  for\; all\; automorphisms}\;\; g\in G(L/K)
%$$ 
%iff $M\in \CN (L/K)$. 
%By Theorem \ref{Arb-1Jun25}.(1),  $E(L/F)\subseteq A$ iff 
%$$
%F=C_E(E(L/F))\supseteq C_E(A)=M.
%$$
%(ii) $\CA=\Big\{\CD \Big(L^{pi}/N\Big)\t \Big(L^{gal}\rtimes G(L^{gal}/\G) \Big)= E\Big(L^{pi}/N\Big)\t E\Big(L^{gal}/\G \Big)\, | \,$ $ N\in \CF \Big(F^{pi}/K\Big),\,  \G\in \CN  (F^{gal}/K)  \Big\}$: The equality $L=L^{pi}\t L^{gal}$ implies the equality 
%$$
%E=E(L/K)=E(L^{pi}\t L^{gal})=E(L^{pi})\t E(L^{gal}). 
%$$
%Now, for each normal subfield $M=M^{pi}\t M^{gal}\in \CN (F/K)$, 
%\begin{eqnarray*}
%C_E(M)&=&C_{E(L^{pi})\t E(L^{gal})} (M^{pi}\t M^{gal})=C_{E(L^{pi})} (M^{pi}) \t C_{E(L^{gal}} (M^{gal})\\
%&=& E\Big(L^{pi}/M^{pi}\Big)\t E\Big(L^{gal}/M^{gal} \Big)=\CD \Big(L^{pi}/M^{pi}\Big)\t \Big(L^{gal}\rtimes G(L^{gal}/M^{gal}) \Big),
%\end{eqnarray*}
%by Theorem \ref{11Jun25}.(2).
%***

%%%%%%%%%%%%%%%%%% SECTION 3 %%%%%%%%%%%%%%%%%%%%%

\section{Analogue of the Galois Theory for  separable finite field extensions} \label{SEPARABLE} %\marginpar{SEPARABLE}

Corollary \ref{Sep-Arb-1Jun25}.(2) is an analogue of the  Galois correspondence for subfields of a separable finite  field extension. Theorem \ref{Gal-11Jun25}.(3) is the  Galois correspondence for Galois  subfields of a separable finite field extension.\\

{\bf Characterization of finite feild extensions $L/K$ with  $ \CD (L/K)=L$.}  Notice that $L\subseteq \CD (L/K)$. Proposition \ref{VB-aC24Mar25} is a criterion for  $ \CD (L/K)=L$.

\begin{proposition}\label{VB-aC24Mar25}%\marginpar{VB-aC24Mar25}
(\cite{AnGaloisTh-NORMAL-Fields})   Let  $L/K$ be a finite  field extension over the field  $K$  of arbitrary  characteristic.  Then the following statements are equivalent:

\begin{enumerate}

\item $\CD (L/K)=L$.

\item $\Der(L/K)=0$.

\item  $\O_{L/K}=0$.

\item  $L/K$ is a separable field extension  ($\Leftrightarrow$ either $\char (K)=0$  or  $\char (K)=p$ and $L=L^{sep}L^p$). 
 
 \item $L_{dif}=L$.
 
 \item The algebra $\CD (L/K)$ is a commutative algebra.

\end{enumerate}
\end{proposition}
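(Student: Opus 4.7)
My plan is to close a cycle of implications through the strong conditions and treat (3), (5), (6) as short detours off it. Explicitly, I would show
\[
(1)\Rightarrow(2)\Leftrightarrow(3)\Leftrightarrow(4),\qquad (1)\Rightarrow(5)\Rightarrow(2),\qquad (1)\Rightarrow(6)\Rightarrow(2),
\]
and then close the loop with the hard implication $(2)\Rightarrow(1)$.

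The easy directions are essentially formal. Because $\CD(L/K) = L\oplus \CD(L/K)_+$ and $\Der(L/K)\subseteq \CD(L/K)_+$, hypothesis (1) forces $\CD(L/K)_+=0$, whence (2); moreover $\CD(L/K)=L$ is commutative (giving (6)) and $L_{dif}=C_L(\CD(L/K))=C_L(L)=L$ (giving (5)). For the converse side implications, I would use the basic commutator identity $[D,l]=D(l)$ in $E(L/K)$ valid for any $D\in\Der(L/K)\subseteq\CD(L/K)$ and any $l\in L$: if $L$ is central in $\CD(L/K)$ (this is exactly (5)) or $\CD(L/K)$ is commutative (this is (6)), then $D(l)=0$ for every $l$, so $D=0$, proving (2). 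The equivalences $(2)\Leftrightarrow(3)\Leftrightarrow(4)$ are classical commutative algebra: $\Der(L/K)\cong\Hom_L(\Omega_{L/K},L)$ and $\Omega_{L/K}$ is a finite-dimensional $L$-vector space, so $(2)\Leftrightarrow(3)$; and the vanishing $\Omega_{L/K}=0$ for a finite field extension is the classical separability criterion (in characteristic $p$ the formulation $L=L^{sep}L^p$ is standard, cf.\ Matsumura).

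The main obstacle will be $(2)\Rightarrow(1)$. In characteristic zero this is immediate: $\CD(L/K)$ is generated over $L$ by $\Der(L/K)$, so $\Der(L/K)=0$ gives $\CD(L/K)=L$ directly. In characteristic $p$ the subtlety is that Hasse--Schmidt higher derivations are not obtained by iterating ordinary derivations, so the vanishing of $\Der(L/K)$ does not visibly kill higher-order operators. My preferred approach is via Grothendieck's description $\CD^n(L,L)=\Hom_L(P^n_{L/K},L)$: a finite separable extension $L/K$ is \'etale, so the sheaf of principal parts is trivial, $P^n_{L/K}=L$ for every $n\geq 0$, and hence $\CD^n(L,L)=L$ for all $n$, forcing $\CD(L/K)=L$. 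A more hands-on alternative is to choose a primitive element $L=K(\alpha)$ with separable minimal polynomial $f$, so $f'(\alpha)\neq 0$, and argue by induction on the filtration order that any $\d\in\CD(L/K)_+$ is forced to annihilate $\alpha$ and its powers, hence is zero. Either route reduces the characteristic-$p$ case to the \'etale/separability statement already present in (4), and closes the cycle.
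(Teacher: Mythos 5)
The paper does not actually prove this proposition: it is imported verbatim from the companion paper \cite{AnGaloisTh-NORMAL-Fields} and stated here without argument, so there is no in-paper proof to compare against. Judged on its own, your proposal is a correct and complete argument. The cycle closes properly: the easy implications $(1)\Rightarrow(2),(5),(6)$ and $(5)\Rightarrow(2)$, $(6)\Rightarrow(2)$ via the commutator identity $[D,l]=D(l)$ are all sound, $(2)\Leftrightarrow(3)\Leftrightarrow(4)$ is the standard duality $\Der(L/K)\simeq\Hom_L(\O_{L/K},L)$ plus the classical separability criterion, and the genuinely nontrivial step is handled correctly by routing $(2)\Rightarrow(1)$ through $(4)\Rightarrow(1)$: for a finite separable extension $L\t_K L$ is an \'etale algebra, the augmentation ideal is idempotent, so all modules of principal parts collapse to $L$ and $\CD(L/K)=\Hom_L(P^n_{L/K},L)=L$. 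You are right to flag that in characteristic $p$ one cannot argue by iterating derivations (Hasse--Schmidt operators are the obstruction), and the principal-parts argument is exactly the clean way around this; the only cosmetic remark is that your characteristic-zero shortcut ($\CD$ generated by $L$ and $\Der$) is subsumed by the same \'etale argument, so you could state $(4)\Rightarrow(1)$ uniformly in all characteristics. Whether this matches the route taken in \cite{AnGaloisTh-NORMAL-Fields} cannot be checked from the present source, but the argument stands on its own.
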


{\bf Analogue of the  Galois Theory for  separable  finite field extensions.} Let $F/K$ be a  separable  finite field extension of arbitrary characteristic  and $L/K$ be its normal closure.
Recall that  
$$ \CA (E(L/K),L, F)=\{A\in  \CA (E(L/K),L)\, | \, C_{E(L/K)}(F)\subseteq A\}.$$

 \begin{corollary}\label{Sep-Arb-1Jun25}%\marginpar{Sep-Arb-1Jun25}
Let $F/K$ be a separable finite field extension  and $L/K$ be its normal closure. Then:
\begin{enumerate}
\item $\CA (E(L/K),L, F)=\{ C_{E(L/K)}(M)= L\rtimes G(L/K)^M=L\rtimes G(L/M)\, | \, M\in \CF (L/K)\}$.

\item {\sc (Analogue of the  Galois correspondence for subfields of a separable  field extension)} The map
 $$ 
 \CF (F/K)\ra  \CA (E(L/K), L, F), \;\; M\mapsto  C_{E(L/K)}(M)=L\rtimes G(L/M)=L\rtimes G(L/K)^M
 $$ 
is a bijection with inverse
\begin{eqnarray*}
A\mapsto C_{E(L/K)}(A)&=& L^{A\cap G(L/K)}.\\
 \bigg( A=L\rtimes G(L/M)&\mapsto &
 L^{G(L/M)}.\bigg)
\end{eqnarray*}

\item   For all fields $M\in \CF (F/K)$, $[M:K]|G(L/M)|=[L:K]$.  
\end{enumerate}

\end{corollary}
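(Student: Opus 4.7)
The plan is to deduce this corollary directly from Theorem \ref{Arb-1Jun25} by specializing to the separable setting, where the algebra of differential operators collapses. The key reduction is that separability forces $\CD(L/M)=L$ for every intermediate field $M$, which trivializes the differential side of the Galois-type correspondence.

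First I would verify that the normal closure $L/K$ of a finite separable extension $F/K$ is itself Galois. Indeed, by the primitive element theorem $F=K(\alpha)$ for some $\alpha$ separable over $K$; then $L$ is the splitting field over $K$ of the minimal polynomial of $\alpha$, which is separable, so $L/K$ is both normal and separable, hence Galois. Moreover, for any intermediate field $M\in\CF(L/K)$ the extension $L/M$ is again separable, because each element of $L$, being separable over $K$, remains separable over the larger base field $M$. Applying Proposition \ref{VB-aC24Mar25} to $L/M$ then yields $\CD(L/M)=L$, and in particular $\CD(L/M)_{+}=0$ and $\dim_{K}\CD(L/M)=[L:K]$.

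With this in hand, parts (1), (2), (3) each follow by direct substitution into the corresponding parts of Theorem \ref{Arb-1Jun25}. For part (1): the skew group algebra
\[
\CD(L/M)\rtimes G(L/M) \;=\; L\rtimes G(L/M),
\]
and the identification $G(L/K)^{M}=G(L/M)$ is recorded in Lemma \ref{c1Jun25}(2) since $L/K$ is normal. For part (2): the inverse map in Theorem \ref{Arb-1Jun25}(2) is
\[
A\;\mapsto\;L^{A\cap \CD(L/K)_{+}}\cap L^{A\cap G(L/K)},
\]
and since $\CD(L/K)_{+}=0$ the first factor equals all of $L$ (no constraint is imposed), so the inverse reduces to $A\mapsto L^{A\cap G(L/K)}$. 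For part (3): substituting $\dim_{K}\CD(L/M)=[L:K]$ into the identity $[M:K]\,|G(L/M)|\,\dim_{K}\CD(L/M)=[L:K]^{2}$ gives $[M:K]\,|G(L/M)|=[L:K]$.

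There is essentially no obstacle; every step is an immediate specialization. The only subtlety worth pausing on is the interaction between the parameter sets $\CF(F/K)$ and $\CF(L/K)$: only subfields $M\subseteq F$ satisfy $C_{E(L/K)}(F)\subseteq C_{E(L/K)}(M)$, so the image of the correspondence of Theorem \ref{Arb-1Jun25} restricted to $\CF(F/K)$ is exactly $\CA(E(L/K),L,F)$, which is already built into the statement of Theorem \ref{Arb-1Jun25} and requires no new argument here.
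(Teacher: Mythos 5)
Your proposal is correct and follows essentially the same route as the paper: both deduce the corollary by specializing Theorem \ref{Arb-1Jun25} via Proposition \ref{VB-aC24Mar25}, using $\CD(L/M)=L$, $\CD(L/K)_+=0$, and the substitution $\dim_K\CD(L/M)=[L:K]$ in the degree identity. The extra details you supply (that the normal closure of a separable extension is Galois and that $L/M$ remains separable) are correct and merely make explicit what the paper leaves implicit.
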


\begin{proof} 1 and 2. Statements 1 and 2  follow from Theorem \ref{Arb-1Jun25}.(1,2) and the fact  that for the  separable finite field extension $L/K$, $\CD (L/K)=L$ (Proposition \ref{VB-aC24Mar25}) and $\CD (L/K)_+=0$. 

3. Statement 3  follows from Theorem \ref{Arb-1Jun25}.(3) and the equality $\CD (L/K)=L$:  For all fields $M\in \CF (F/K)$, 
$$
[L:K]^2=[M:K]|G(L/M)|\dim_K(\CD (L/M))=[M:K]|G(L/M)[L:K],
$$ 
and so $[M:K]|G(L/M)|=[L:K]$.  
\end{proof}

{\bf Galois subfields of  separable finite field extensions.}  Theorem  \ref{Gal-11Jun25} gives  an order reversing bijection between 
 Galois subfields of a   Galois  finite field extension $L/K$ and the set $\CA (E(L/K),L, G(L/K))$ of $G(L/K)$-stable subalgebras of the algebra $E(L/K)$ that contain the field $L$. Recall that (Corollary \ref{a24Mar25}.(2)), 
 $$
 \CA (E(L/K),L, G(L/K))=\{L\rtimes  N \, | \, N\;\;{\rm is\; a\; normal\; subgroup\; of}\;\; G(L/K) \}.
 $$
For a finite field extension $L/K$, let $\CG (L/K)$ be the set of  Galois  subfields $N/K$ of $L/K$ and 
 $\CN(G(L/K))$ be the set of normal subgroups of the group $ G(L/K)$. Recall that the set $\CG (L/K)$ is the set of all $K$-subfields of $L$ that are  G-extensions (Proposition \ref{A8May25}). 
 Theorem \ref{11Jun25}.(3) 
is the  Galois correspondence for Galois  subfields of the Galois field extension $L/K$.

\begin{theorem}\label{Gal-11Jun25}%\marginpar{Gal-11Jun25}
(\cite{GaloisTh-RingThAp})   Let $L/K$ be a  G-extension (i.e. a Galois extension). Then: 
\begin{enumerate}

\item $\CA (E(L/K),L, G(L/K))=\{C_E(\G )=L\rtimes G(L/\G)\, | \,$ $ \G\in \CG  (L/K)  \}=\{L\rtimes N\, | \, N\in \CN  (G(L/K))  \}$ and $\CN (G(L/K))= \{ G(L/\G)\, | \,$ $ \G\in \CG  (L/K)  \}$.

\item The map
$$
\CG (L/K)\ra  \CA (E(L/K),L, G(L/K)), \;\; \G \mapsto  C_{E}(M)=L\rtimes G(L/\G) 
$$
 is a bijection with inverse
$$ A\mapsto C_{E}(A)=L^{A\cap G(L/K)}.
$$
\item {\sc (The  Galois correspondence for Galois  subfields of a Galois field extension)} 

The map
$$
\CG (L/K)\ra \CN (G(L/K)), \;\; \G\mapsto  G(L/\G)=G(L/K)^{\G}
$$ 
is a bijection with inverse $
\G\mapsto L^{\G}.$
\end{enumerate}
\end{theorem}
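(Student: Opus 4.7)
The plan is to deduce the theorem from Theorem \ref{Gal-1Jun25} (the Galois correspondence without the normality constraint) by restricting the already-established bijection $\CF(L/K) \leftrightarrow \CA (E,L)$ to the sub-class $\CA (E,L,G)$ of $G$-stable subalgebras, and combining this with Corollary \ref{a24Mar25}.(2), which identifies $\CA(E,L,G)=\{L\rtimes N\,|\, N\in \CN(G)\}$. The core assertion to prove is part (3): the bijection $M\mapsto G(L/M)$ of Theorem \ref{Gal-1Jun25}.(3) restricts to a bijection $\CG(L/K)\to \CN(G)$. Once this is in place, part (2) is obtained by post-composing with $H\mapsto L\rtimes H$ (using Corollary \ref{a24Mar25}.(2) to identify the image as $\CA(E,L,G)$), and part (1) becomes a restatement that unpacks both the $C_E(\Gamma)$ description and the $L\rtimes N$ description.

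For part (3), I would establish the chain of equivalences $M\in \CG(L/K) \Longleftrightarrow g(M)=M$ for all $g\in G \Longleftrightarrow gG(L/M)g^{-1}=G(L/M)$ for all $g\in G \Longleftrightarrow G(L/M)\in \CN(G)$. The first equivalence uses separability of $L/K$: every intermediate $M$ is automatically separable, hence $M\in \CG(L/K)$ iff $M/K$ is normal; and normality of $M/K$ is equivalent to $g(M)=M$ for all $g\in G$, because every $K$-embedding $M\hookrightarrow \bK$ extends to an element of $G(L/K)$ (by normality of $L/K$). The second equivalence is a short stabilizer computation built on the identity $M=L^{G(L/M)}$, which is Theorem \ref{Gal-1Jun25}.(3). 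The third is definitional.

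For parts (1) and (2), the bijection of part (3) lifts to $\CG(L/K)\to \CA(E,L,G)$, $\Gamma \mapsto L\rtimes G(L/\Gamma)$, since by Corollary \ref{a24Mar25}.(2) the subalgebra $L\rtimes H$ is $G$-stable exactly when $H$ is a normal subgroup of $G$. The inverse $A\mapsto L^{A\cap G}$ is inherited directly from Theorem \ref{Gal-1Jun25}.(2), and the identifications in part (1) read off as the two factorizations of this bijection, namely $\Gamma\mapsto G(L/\Gamma)\mapsto L\rtimes G(L/\Gamma)$ together with $A\mapsto A\cap G\mapsto L^{A\cap G}$.

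The main obstacle is the reverse direction of $g(M)=M \Longleftrightarrow gG(L/M)g^{-1}=G(L/M)$. The forward direction is immediate: if $g(M)=M$, then for $\sigma\in G(L/M)$ and $m\in M$, $g\sigma g^{-1}(m)=g\sigma(g^{-1}(m))=g(g^{-1}(m))=m$, since $g^{-1}(m)\in M$. The reverse direction genuinely needs the Galois identity $M=L^{G(L/M)}$: if $gG(L/M)g^{-1}=G(L/M)$ and $m\in M$, then for each $\sigma\in G(L/M)$ the element $g^{-1}\sigma g$ lies in $G(L/M)$ and hence fixes $m$, so $\sigma(g(m))=g(g^{-1}\sigma g)(m)=g(m)$; thus $g(m)\in L^{G(L/M)}=M$, and replacing $g$ by $g^{-1}$ gives the reverse inclusion. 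All remaining verifications—the $G$-stability criterion for $L\rtimes H$ and the formula $C_E(A)=L^{A\cap G}$—are available off the shelf from Corollary \ref{a24Mar25}.(2) and Theorem \ref{Gal-1Jun25}.
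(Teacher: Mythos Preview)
The paper does not supply its own proof of Theorem \ref{Gal-11Jun25}: the result is quoted from \cite{GaloisTh-RingThAp} and used as a black box (e.g.\ in the proof of Theorem \ref{GalArb-11Jun25}). So there is no in-paper argument to compare against.

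Your proposal is correct and is exactly the route one would take using only the tools already recorded in this paper. You combine Theorem \ref{Gal-1Jun25} (the bijection $\CF(L/K)\leftrightarrow \CA(E,L)$ via $M\mapsto L\rtimes G(L/M)$ with inverse $A\mapsto L^{A\cap G}$) with Corollary \ref{a24Mar25}.(2) (which, since $E=L\rtimes G$ for a G-extension, says $\CA(E,L,G)=\{L\rtimes N\,|\,N\in\CN(G)\}$), and then verify that the bijection of Theorem \ref{Gal-1Jun25}.(3) carries $\CG(L/K)$ onto $\CN(G)$. Your stabilizer argument for the equivalence $g(M)=M\ \forall g\in G \Longleftrightarrow G(L/M)\unlhd G$ is the standard one and is complete; the appeal to $M=L^{G(L/M)}$ for the reverse direction is exactly right, and the ``every $K$-embedding of $M$ extends to an element of $G$'' step is justified by normality of $L/K$. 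One tiny remark: you could shorten the reverse direction to the one-liner $g(M)=g(L^{G(L/M)})=L^{gG(L/M)g^{-1}}=L^{G(L/M)}=M$, but your elementwise version is equally valid.
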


{\bf Proof of Theorem \ref{GalArb-11Jun25}.}  
 The  theorem     follows from 
 Theorem \ref{Gal-11Jun25}. \hfill $\square$
% The  theorem     follows from 
%Theorem \ref{Arb-11Jun25} $(\char (K)=p$) and
% Theorem \ref{Gal-11Jun25} $(\char (K)=0$).

%%%%%%%%%%%%%%%%%% SECTION 4 %%%%%%%%%%%%%%%%%%%%%

\section{Analogue of the Galois Theory for  purely inseparable finite field extensions} \label{PURELY-INSEP} %\marginpar{PURELY-INSEP}

The results presented in this section were originally proved in \cite{AnGaloisTh-NORMAL-Fields}. We include them here for the sake of completeness.
 In this section,  $K$ is a field of characteristic $p$ and $L/K$ is a purely inseparable   finite field extension. Theorem \ref{C24Mar25} provides an explicit description of the algebra of differential operators  $\CD (L/K)$. Theorem \ref{D24Mar25} is an analogue of the Galois Theory for purely inseparable field extensions.\\ 

% that establishes a bijection between subfields and the algebras  of   differential operators. All the results of this section are proven in \cite{AnGaloisTh-NORMAL-Fields} and they are included for the sake of completeness.\\
 
{\bf Description of the algebra of differential operators on a purely inseparable finite field extension.}   Theorem \ref{C24Mar25} provides an explicit description of the algebra of differential operators 
 $\CD (L/K)$ for a purely inseparable finite field extension $L/K$ in characteristic $p$. 

\begin{theorem}\label{C24Mar25}%\marginpar{C24Mar25}
Let $K$ be a field of  characteristic $p$, 
 $L/K$ be a  purely inseparable field extension  of degree $n:=[L:K]<\infty$. Then:
 \begin{enumerate}
 
 \item $\CD (L/K)=E(L/K)\simeq M_n(K)\in \fCK$ and $\Deg (\CD (L/K))=n$. In particular, all purely inseparable finite field extensions are B-extensions.
 
 \item $L\in \Max_s (\CD (L/K))$.

 \item $Z(\CD (L/K))=K$.
 
 \item $L$ is the only (up to isomorphism) simple 
 $\CD (L/K)$-module.
 
 \item $\End_{\CD (L/K)}(L)=K$.

\end{enumerate}
\end{theorem}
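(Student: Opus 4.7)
The centerpiece is item (1), the equality $\CD (L/K) = E(L/K)$. My plan is to invoke Corollary \ref{VVB-a4May25}, which says that a finite field extension is a D-extension (i.e. $E(L/K) = \CD (L/K)$) if and only if it is purely inseparable; this gives item (1) immediately. Since $\dim_K L = n$, we have $E(L/K) = \End_K(L) \simeq M_n(K)$, a central simple $K$-algebra of degree $n$, and the same therefore holds for $\CD (L/K)$. Item (3), $Z(\CD (L/K)) = K$, is then immediate, and $L/K$ is automatically a B-extension: purely inseparability forces $G(L/K) = 1$, so the inclusion $\CD (L/K) \rtimes G(L/K) \subseteq E(L/K)$ collapses to the already-proven equality $\CD (L/K) = E(L/K)$.

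For item (2), I would apply the Double Centralizer Theorem inside the central simple algebra $\CD (L/K)$. Embed $L$ via left multiplication; any element of $E(L/K) = \End_K(L)$ that commutes with all of $L$ is $L$-linear, hence itself left multiplication by an element of $L$. Thus $C_{E(L/K)}(L) = \End_L(L) = L$, i.e. $L$ is self-centralizing. Since $[L:K] = n$ equals the degree of $\CD (L/K)$, this is exactly the condition for $L$ to be a (strictly) maximal commutative subfield, giving $L \in \Max_s(\CD (L/K))$.

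For items (4) and (5), I would use that $M_n(K)$ has, up to isomorphism, a unique simple module, of $K$-dimension $n$. The space $L$ with the tautological $\End_K(L)$-action is an irreducible $\CD (L/K)$-module of $K$-dimension $n$, so it realises that unique simple module, proving (4). For (5), Schur's lemma makes $\End_{\CD (L/K)}(L)$ a division ring, and any element of it commutes with all of $\End_K(L) = \CD (L/K)$, hence lies in the centre $Z(E(L/K)) = K$; the reverse inclusion $K \subseteq \End_{\CD (L/K)}(L)$ is obvious, so $\End_{\CD (L/K)}(L) = K$.

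All genuine content is concentrated in Corollary \ref{VVB-a4May25}; the rest of the proof is routine central simple algebra. The real obstacle, hidden inside that earlier corollary, is producing enough differential operators on a purely inseparable extension to span $\End_K(L)$ — typically achieved by exhibiting a basis of $\CD (L/K)$ of size $n^2$ built from divided-power (Hasse–Schmidt) derivations adapted to a $p$-basis of $L/K$.
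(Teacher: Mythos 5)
The paper does not actually prove Theorem \ref{C24Mar25} in this document: Section \ref{PURELY-INSEP} is recalled wholesale from \cite{AnGaloisTh-NORMAL-Fields} (``All the results of this section are proven in \cite{AnGaloisTh-NORMAL-Fields} and they are included for the sake of completeness''), so there is no in-paper argument to compare yours against. Judged on its own terms, your derivation is correct: once $\CD (L/K)=E(L/K)\simeq M_n(K)$ is granted, items (2)--(5) are exactly the routine central simple algebra facts you invoke --- $C_{E(L/K)}(L)=\End_L(L)=L$ with $[L:K]=n=\Deg(\CD (L/K))$ makes $L$ a strictly maximal subfield, the centre of $M_n(K)$ is $K$, the unique simple module of $M_n(K)$ is realised by the tautological action on $L$, and Schur plus centrality gives $\End_{\CD (L/K)}(L)=K$ --- and the B-extension claim follows from $G(L/K)=\{e\}$ for purely inseparable extensions. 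The one caveat is logical order: you obtain item (1) by citing Corollary \ref{VVB-a4May25}, but in the source paper that corollary is surely a \emph{consequence} of Theorem \ref{C24Mar25}.(1), which is proved there by explicitly exhibiting enough differential operators (divided-power/Hasse--Schmidt derivations adapted to a $p$-basis) to span $\End_K(L)$. So as a reconstruction of the original proof your argument is circular at its core, but as a derivation within the present paper, where Corollary \ref{VVB-a4May25} is available as a black box, it is unobjectionable --- and you correctly and explicitly flag that all the genuine content is concentrated in that one equality.
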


Let $\CA ( \CD (L/K), L, sim):=\{ A\in \CA ( \CD (L/K), L)\, | \, A$ is a simple algebra$\}$. 
Proposition \ref{aD24Mar25} shows that all  subalgebras of $\CD (L/K)$ that contain the field $L$ are simple algebras where $L/K$ is a  purely inseparable field extension  of finite degree.

\begin{proposition}\label{aD24Mar25}%\marginpar{aD24Mar25}
 Let $K$ be a field of  characteristic $p$, 
 $L/K$ be a  purely inseparable field extension  of finite degree. Then 
 $$
 \CA ( \CD (L/K), L)=\CA ( \CD (L/K), L, sim),
 $$
  i.e. every subalgebra of $\CD (L/K)$ that contains the field $L$ is a simple algebra.
\end{proposition}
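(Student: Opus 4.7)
My plan is to exhibit $L$ as a faithful simple left $A$-module, and then deduce simplicity of $A$ from the finite-dimensionality of $A$ over $K$ together with Jacobson density (or, equivalently, Wedderburn).

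By Theorem \ref{C24Mar25}.(1), $\CD (L/K) = E(L/K) = \End_K(L)$, so $A \subseteq \End_K(L)$ acts faithfully on $L$ by construction. The first key step is to show that $L$ is simple as a left $A$-module. Let $M$ be a nonzero $A$-submodule of $L$. Since $L \subseteq A$ (this is the entire point of the hypothesis $A \in \CA(\CD (L/K), L)$), $M$ is in particular closed under multiplication by elements of $L$, i.e.\ $M$ is a nonzero $L$-subspace of $L$. But $L$ has no nontrivial $L$-subspaces, so $M = L$. Thus $L$ is a faithful simple left $A$-module.

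Next I compute $D := \End_A(L)$. Any $\phi \in \End_A(L)$ must in particular commute with the action of $L \subseteq A$ by left multiplication, so $\phi$ is $L$-linear, hence $\phi(x) = x \phi(1)$ for all $x \in L$, i.e.\ $\phi$ is right (equivalently left, since $L$ is commutative) multiplication by some $c = \phi(1) \in L$. The further condition that $\phi$ commute with every $a \in A$ translates, after identifying $c$ with its multiplication operator in $L \subseteq A$, to $c \in Z(A) \cap L = C_E(A)$ (noting that $C_E(A) \subseteq C_E(L) = L$ since $L \in \Max_s(\CD (L/K))$ by Theorem \ref{C24Mar25}.(2), so $L = C_E(L)$). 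Thus $D = Z(A) \cap L$ is a subfield of $L$, in particular a division ring.

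Finally, I invoke Jacobson density: $A$ acts densely on the simple $D$-module $L$, so the image of $A$ in $\End_D(L)$ is dense. Since $[L:K] < \infty$ and $K \subseteq D$, also $\dim_D L < \infty$, so density forces $A \cong \End_D(L) \cong M_{[L:D]}(D)$, which is simple. Hence every $A \in \CA (\CD (L/K), L)$ is simple, i.e.\ $\CA (\CD (L/K), L) = \CA (\CD (L/K), L, \mathrm{sim})$.

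I do not expect a serious obstacle here; the main technical point is the identification $\End_A(L) = Z(A) \cap L$, which reduces (via $L \subseteq A$) to the self-centralizing property $C_E(L) = L$ that is already packaged in Theorem \ref{C24Mar25}.(2). Once this is in hand, the simplicity is a standard consequence of the Jacobson density theorem (or equivalently of the Artin--Wedderburn classification, since $A$ is finite-dimensional and primitive).
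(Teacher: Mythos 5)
Your argument is correct and complete. A caveat on the comparison: this paper does not actually contain its own proof of Proposition \ref{aD24Mar25} --- the text states that all results of Section \ref{PURELY-INSEP} are proved in the companion paper \cite{AnGaloisTh-NORMAL-Fields} and are only quoted here --- so there is no in-paper argument to match you against line by line. That said, your route (exhibit $L$ as a faithful simple $A$-module using $L\subseteq A$, identify $\End_A(L)$ with the subfield $C_{E}(A)=Z(A)\subseteq L$ via the self-centralizing property $C_{E(L/K)}(L)=\End_L(L)=L$, then apply density/Wedderburn to conclude $A\simeq M_{[L:D]}(D)$) is exactly the central-simple-algebra mechanism the paper announces as its method, and it uses only the ingredient Theorem \ref{C24Mar25}.(1) ($\CD(L/K)=E(L/K)$ for purely inseparable $L/K$) from the surrounding material. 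Two small remarks: the appeal to $L\in \Max_s(\CD(L/K))$ is heavier than needed, since $C_{\End_K(L)}(L)=\End_L(L)=L$ is immediate; and your proof in fact establishes the stronger, characteristic-free statement that \emph{every} $K$-subalgebra of $\End_K(L)$ containing $L$ is simple (indeed of the form $\End_D(L)$ for the subfield $D=Z(A)$) for an arbitrary finite extension $L/K$ --- pure inseparability enters only through the identification $\CD(L/K)=\End_K(L)$. No gaps.
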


{\bf Analogue of the Galois Theory for purely inseparable field extensions.}  
Theorem \ref{D24Mar25} is an analogue of the Galois Theory for purely inseparable finite field extensions that gives a bijection between subfields and the algebras  of   differential operators (Theorem \ref{D24Mar25}.(1,2)) and between subfields and the dominant Lie algebras  (Theorem \ref{D24Mar25}.(3,4)).

%Theorem \ref{D24Mar25} is an analogue of the Galois Theory for purely inseparable field extensions that establishes a bijection between subfields and the algebras  of   differential operators. 

%Applying  Theorem \ref{D24Mar25}  
%to purely inseparable field extensions with differential basis yields the results of N. Jacobson *** ref + details *** and in general situation we obtain the results of *** *** that establishes a bijection between subfields and the  higher derivations of field extensions.

\begin{theorem}\label{D24Mar25}%\marginpar{D24Mar25}
 Let $K$ be a field of  characteristic $p$, 
 $L/K$ be a  purely inseparable finite  field extension. Then:
 \begin{enumerate}

\item $\CA ( \CD (L/K), L)\stackrel{{\rm Pr.}\, \ref{aD24Mar25}}{=}\CA ( \CD (L/K), L, sim)=\{ \CD(L/M)\, | \, M\in \CF (L/K) \}$.

\item {\sc (Analogue of the  Galois correspondence for subfields of purely inseparable  field extensions)} The map 
$$\CF (L/K)\ra \CA ( \CD (L/K), L)\stackrel{{\rm Pr.}\, \ref{aD24Mar25}}{=}\CA (\CD (L/K), L, sim), \;\; M\mapsto  C_{\CD (L/K)}(M)=\CD (L/M)$$  is a bijection with inverse $A\mapsto C_{\CD (L/K)}(A)$.
% $\Big( \CD'=\CD (L/M)\mapsto L^{\CD (L/M)_+}\Big)$. 

\item $\mL (L/K)=\{ \CG (M)=\CD(L/M)_+\, | \, M\in \CF (L/K)\}$.

\item  {\sc (Analogue of the Galois correspondence  for subfields of a purely inseparable finite  field extension)}

{\em The map}
$$
\CF (L/K)\ra  \mL (L/K), \;\; M\mapsto \CG (M)=\CD(L/M)_+
$$
{\em is a bijection with inverse} $\CG\mapsto L^\CG=C_L(\CG)$.

\end{enumerate}
\end{theorem}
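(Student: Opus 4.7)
The plan is to deduce both parts directly from the Double Centralizer Theorem for central simple algebras, applied to $\CD(L/K)=E(L/K)$. By Theorem~\ref{C24Mar25}, this algebra is isomorphic to $M_n(K)$ with centre $K$, and by Theorem~\ref{C24Mar25}.(2) the subfield $L$ sits inside $E(L/K)$ as a maximal commutative subalgebra, so $C_{E(L/K)}(L)=L$ (equivalently, $\dim_K L\cdot\dim_K C_{E(L/K)}(L)=n^2$ forces $\dim_K C_{E(L/K)}(L)=n$, and $L\subseteq C_{E(L/K)}(L)$ then gives equality). By Proposition~\ref{aD24Mar25} every $A\in\CA(\CD(L/K),L)$ is itself a simple $K$-subalgebra, while every subfield $M\in\CF(L/K)$ is trivially simple. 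These are exactly the hypotheses required to invoke the DCT twice.

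For part~(1), I would take an arbitrary $A\in\CA(\CD(L/K),L)$ and set $M:=C_{E(L/K)}(A)$. Because $L\subseteq A$, one has $M\subseteq C_{E(L/K)}(L)=L$, so $M$ is a $K$-subfield of $L$. Applying the DCT to the simple subalgebra $A$ then yields
\[
A=C_{E(L/K)}\bigl(C_{E(L/K)}(A)\bigr)=C_{E(L/K)}(M),
\]
and Lemma~\ref{c1Jun25}.(1), combined with $\CD(L/K)=E(L/K)$, identifies $C_{E(L/K)}(M)=\CD(L/M)$. Hence $A=\CD(L/M)$, which together with Proposition~\ref{aD24Mar25} establishes part~(1).

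For part~(2), surjectivity of the map $M\mapsto\CD(L/M)=C_{\CD(L/K)}(M)$ is precisely the content of part~(1). Applying the DCT now to the simple (indeed commutative) subalgebra $M$ gives
\[
C_{\CD(L/K)}\bigl(\CD(L/M)\bigr)=C_{E(L/K)}\bigl(C_{E(L/K)}(M)\bigr)=M,
\]
which proves injectivity and identifies the inverse as $\CD'\mapsto C_{\CD(L/K)}(\CD')$. The parenthetical formula $\CD(L/M)\mapsto L^{\CD(L/M)_+}$ follows because $L\subseteq\CD(L/M)$ forces $C_{E(L/K)}(\CD(L/M))\subseteq C_{E(L/K)}(L)=L$, so the centralizer already lives in $L$ and reduces to $C_L(\CD(L/M)_+)$; since $L/K$ is purely inseparable and therefore normal, Corollary~\ref{a1Jun25} rewrites this as $L^{\CD(L/M)_+}$.

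The argument is really just two applications of the DCT plus the dictionary $C_{E(L/K)}(M)=\CD(L/M)$ from Lemma~\ref{c1Jun25}.(1), so there is no single hard step; the main point to be careful about is the packaging of hypotheses, namely that $\CD(L/K)$ is central simple, that every element of $\CA(\CD(L/K),L)$ is simple, and that $L$ is self-centralizing. All three are supplied by Theorem~\ref{C24Mar25} and Proposition~\ref{aD24Mar25}, after which the proof is essentially formal.
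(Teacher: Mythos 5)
Your proof is correct and follows exactly the ring-theoretic route the paper prescribes: two applications of the Double Centralizer Theorem to the central simple algebra $\CD (L/K)=E(L/K)$, using Proposition \ref{aD24Mar25} for simplicity of the subalgebras, the self-centralizing property of $L$, and Lemma \ref{c1Jun25}.(1) to identify $C_{E(L/K)}(M)=\CD (L/M)$. The paper itself omits the proof of Theorem \ref{D24Mar25} (deferring to \cite{AnGaloisTh-NORMAL-Fields}), but the method it describes is precisely this DCT argument, so your write-up matches the intended proof.
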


Recall that  the classical Galois correspondence for finite Galois field extensions is the bijection $M\mapsto G(L/M)$. So, in Theorem \ref{D24Mar25}   the algebras differential operators $\CD (L/K)$ on purely inseparable finite field extensions $L/K$ or the dominant Lie algebras  $\mL(L/K)$ play the role of the Galois group in the classical Galois Theory.

%Suppose $L/K$ is a purely inseparable  finite  field extension of finite degree. Then 
%the map 
%%\marginpar{CFL=mDL}
%\begin{equation}\label{CFL=mDL}
%\CF (L/K)\ra \mD (L/K):=\{ \CD (L/M)\, | \, M/K\in \CF (L/K)\}, \;\; M\mapsto \CD (L/M)
%\end{equation}
%is a surjection. 
%Corollary \ref{B23Mar25}.(5) shows that the map (\ref{CFL=mDL}) is a bijection.

%\begin{corollary}\label{B23Mar25}%\marginpar{B23Mar25}
%Suppose that $K$ is a field of characteristic %$p>0$,  $L/K$ is a purely inseparable finite field extension of finite degree and $M/K\in \CF (L/K)$. Then:
 
%\begin{enumerate}

%\item $\CD (L/M)=E(L/M)\simeq M_{[L:M]}(M)\in \fC (M)$ and  $\dim_K(\CD (L/M))=[L:M]^2[M:K]=[L:K][L:M]$.

%\item $\CD (L/M)\subseteq \CD (L/K)$ and $\CD (L/M)=C_{\CD (L/K)}(M)$.

%\item $Z(\CD (L/M))=C_{\CD (L/K)}(\CD (L/M))=M$.

%\item $\CD(L/C_{\CD (L/K)}(\CD (L/M)))=\CD (L/M)$.

%\item The map (\ref{CFL=mDL})
%is a bijection with inverse $\CD'\mapsto C_{\CD (L/K)}(\CD')$.

%\end{enumerate}
%\end{corollary}

%%%%%%%%%%%%%%%%%% SECTION 5 %%%%%%%%%%%%%%%%%%%%%

\section{Examples of Galois type correspondences and descriptions of  $L^{pi}$,   $L^{gal}$ and  $G(L/K)$ of a normal finite field extension $L/K$} \label{EXAMPLE} %\marginpar{EXAMPLE}

In this section, we consider  an example  of the Galois correspondences in Theorem \ref{Arb-1Jun25}.(2) and Theorem \ref{Arb-11Jun25}.(2), see Proposition \ref{XA7Jul25}.  It also provides an example of a normal finite field extension in prime characteristic such that  all subfields are normal. 
For a normal finite field extension $L/K$,  Proposition \ref{Ext-A12Apr25}   provides explicit descriptions of the subfields $L^{pi}$ and  $L^{gal}$ of $L/K$ and the automorphism group $G(L/K)$.

 Recall that  that every normal finite field extension $L/K$ in  characteristic $p$ is the tensor product $L=L^{pi}\t_K L^{sep}$ of its subfields $L^{pi}$ and $L^{sep}=L^{gal}$. This is not the case if we drop the assumption of normality.

\begin{lemma}\label{16Nov25}%\marginpar{16Nov25}
 A finite field extension $L/K$ is a normal extension iff $L=L^{pi}\t L^{gal}$. So, a finite field extension $L/K$ is not normal  iff $L\neq L^{pi}\t L^{gal}$ iff $[L:K]> [L^{pi}:K][ L^{gal}:K]$. 
\end{lemma}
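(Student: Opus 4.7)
The plan is to establish the equivalence in two implications and then deduce the numerical reformulation from a degree inequality that always holds. First I would record the key preliminary: any element of $L^{pi}\cap L^{gal}$ is simultaneously purely inseparable and separable over $K$, so $L^{pi}\cap L^{gal}=K$. Consequently $L^{pi}$ and $L^{gal}$ are linearly disjoint over $K$, the canonical $K$-algebra map $L^{pi}\t_K L^{gal}\to L$ is injective with image the compositum $L^{pi}\cdot L^{gal}\subseteq L$, and
$$[L^{pi}\cdot L^{gal}:K]=[L^{pi}:K]\cdot[L^{gal}:K].$$
Multiplicativity of degrees then yields the unconditional inequality $[L:K]\ge [L^{pi}:K][L^{gal}:K]$, with equality if and only if $L=L^{pi}\t_K L^{gal}$; this identifies the second and third ``iff'' of the lemma's last sentence.

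For the implication ``normal $\Rightarrow L=L^{pi}\t L^{gal}$'' I would split on characteristic. If $\mathrm{char}(K)=0$, every finite extension is separable, so $L^{pi}=K$, and a normal $L/K$ is automatically Galois, giving $L^{gal}=L$ and the decomposition trivially. If $\mathrm{char}(K)=p$, I would invoke the classical structure theorem for finite normal extensions, namely $L=L^{pi}\t_K L^{sep}$ (recalled in the paper's introduction). Since $L/K$ is normal so is $L^{sep}/K$, and a separable normal extension is Galois, hence $L^{sep}=L^{gal}$, and the desired decomposition follows.

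For the reverse implication, suppose $L=L^{pi}\t_K L^{gal}$, i.e.\ $L$ equals the compositum inside itself of its two distinguished subfields. Both factors are normal over $K$: the purely inseparable one because every purely inseparable extension is normal, the Galois one by definition. Since the compositum of two normal subextensions of a common overfield is normal over the base, $L/K$ is normal. The second sentence of the lemma is then a direct consequence of the first sentence combined with the preliminary inequality above.

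The only non-formal input is the classical decomposition $L=L^{pi}\t_K L^{sep}$ for normal finite extensions in positive characteristic, which is the main technical ingredient; it is well known (and is recalled in the excerpt as the starting point for this discussion), so I would cite it rather than reprove it. Everything else reduces to linear disjointness of purely inseparable and separable subextensions and to the standard principle that a compositum of normal subextensions is normal.
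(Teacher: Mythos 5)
Your proof is correct and follows essentially the same route as the paper: the forward implication is the classical decomposition $L=L^{pi}\t_K L^{sep}$ for normal extensions (which the paper simply cites as ``known''), and the reverse implication is the normality of a compositum of normal subextensions. You additionally supply the linear-disjointness/degree argument needed for the final ``iff'' about $[L:K]>[L^{pi}:K][L^{gal}:K]$, which the paper leaves implicit; that addition is correct and welcome.
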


\begin{proof} $(\Rightarrow)$ The implication is known.

$(\Leftarrow)$ Since $L=L^{pi}\t L^{gal}$ and the field extensions $L^{pi}/K$ and  $L^{gal}/K$ are normal, the finite field extension $L/K$ is  normal.
\end{proof}

\begin{theorem}\label{23May25}%\marginpar{23May25}
 (\cite{AnGaloisTh-NORMAL-Fields})    Let $L/K$ be a finite field extension. Then $L_{dif}=L^{sep}=L^{\CD (L/K)_+}$.
\end{theorem}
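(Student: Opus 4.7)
\emph{Proof plan.} My strategy is to identify $\CD(L/K)$ explicitly with $\End_{L^{sep}}(L)$, after which both equalities $L_{dif} = L^{sep}$ and $L^{\CD(L/K)_+} = L^{sep}$ become straightforward consequences of standard facts about matrix algebras over $L^{sep}$. The plan splits into one main technical step (the identification) followed by two elementary deductions. The separable case is a degenerate instance in which $L^{sep} = L$ and $\CD(L/K)_+ = 0$ by Proposition \ref{VB-aC24Mar25}, so the substantive content is when $L/K$ is inseparable.

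The key step is to prove the equality
$$\CD(L/K) \;=\; \CD(L/L^{sep}) \;=\; \End_{L^{sep}}(L),$$
where the second equality is Theorem \ref{C24Mar25} applied to the purely inseparable extension $L/L^{sep}$. The inclusion $\End_{L^{sep}}(L) \subseteq \CD(L/K)$ is formal from the recursive definition of differential operators, since $L^{sep}$-linear maps are automatically $K$-linear and the commutator conditions are tested against the same $L$. For the nontrivial inclusion $\CD(L/K) \subseteq \End_{L^{sep}}(L)$, I would induct on the order $n$ of $d \in \CD(L/K)$, showing $[d, s] = 0$ for every $s \in L^{sep}$. The base case $n=0$ is trivial since $L$ is commutative. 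For the inductive step, applying the Jacobi identity to $(d, t, s)$ with $t \in L$, together with $[t,s] = 0$ and the inductive hypothesis $[[d,t], s] = 0$, shows that $[d,s]$ centralizes $L$ inside $\End_K(L)$; since $L$ is its own centralizer in $\End_K(L)$, we obtain $[d,s] \in L$. The assignment $s \mapsto [d,s]$ is then a $K$-derivation $L^{sep} \to L$ (with $L$ viewed as an $L^{sep}$-module via multiplication), and since $L^{sep}/K$ is finite separable we have $\Omega_{L^{sep}/K} = 0$, forcing this derivation to vanish. I expect this induction, and in particular the module-theoretic setup required to invoke $\Omega_{L^{sep}/K} = 0$, to be the main obstacle.

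Granting the identification, the first equality follows from $L_{dif} = L \cap Z(\End_{L^{sep}}(L)) = L \cap L^{sep} = L^{sep}$, using that $\End_{L^{sep}}(L) \cong M_{[L:L^{sep}]}(L^{sep})$ has center $L^{sep}$. For the second equality, the inclusion $L^{sep} \subseteq L^{\CD(L/K)_+}$ is immediate: if $d$ is $L^{sep}$-linear with $d(1)=0$, then $d(s) = s\,d(1) = 0$ for every $s \in L^{sep}$. The reverse inclusion is a basis argument: if $l \notin L^{sep}$, then $1$ and $l$ are $L^{sep}$-linearly independent in $L$, so one can extend $\{1, l\}$ to an $L^{sep}$-basis and construct an $L^{sep}$-linear map $d \colon L \to L$ with $d(1) = 0$ and $d(l) \ne 0$, exhibiting $l \notin L^{\CD(L/K)_+}$.
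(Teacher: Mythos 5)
This paper states Theorem \ref{23May25} as a citation from \cite{AnGaloisTh-NORMAL-Fields} and gives no proof of it here, so there is no in-paper argument to compare yours against; I can only judge your proposal on its own merits, and it is correct. Your central identification $\CD (L/K)=\CD (L/L^{sep})=\End_{L^{sep}}(L)$ is sound: the inclusion $\CD (L/L^{sep})\subseteq \CD (L/K)$ is indeed formal from the order filtration, and your induction for the converse works --- the Jacobi identity plus the hypothesis $[[d,t],s]=0$ gives $[[d,s],t]=0$ for all $t\in L$, the self-centralizing property $C_{\End_K(L)}(L)=L$ forces $[d,s]\in L$, the map $s\mapsto [d,s]$ is a $K$-derivation $L^{sep}\to L$ by the standard commutator Leibniz identity, and it vanishes because $\Omega_{L^{sep}/K}=0$ (which is exactly the separability criterion the paper records as Proposition \ref{VB-aC24Mar25}). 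The two deductions are then routine: $L_{dif}=L\cap Z(\End_{L^{sep}}(L))=L^{sep}$ since multiplication by any $l\in L$ already lies in $\End_{L^{sep}}(L)$, and the basis argument for $L^{\CD (L/K)_+}=L^{sep}$ is fine. Your route is also consistent with the machinery this paper quotes from the companion article: Theorem \ref{C24Mar25} handles the purely inseparable layer $L/L^{sep}$, and Lemma \ref{d1Jun25} is the normal-extension specialization ($\CD (L/K)=E(L/L^{gal})$) of exactly the identity you prove in general. The only cosmetic point is that you invoke Theorem \ref{C24Mar25} for the second displayed equality while only the elementary fact $\End_{L^{sep}}(L)\subseteq\CD (L/K)$ is actually needed for your argument; nothing is at stake there.
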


{\bf Explicit descriptions of the subfields $L^{pi}$ and $L^{gal}$ of a normal finite field extension $L/K$ and the group $G(L/K)$.} 
\begin{definition}
Suppose that $K$ is a field of prime characteristic $p>0$. Then each non-scalar polynomial $f(x)\in K[x]$ admits a unique presentation 

%\marginpar{f=fsepxn}
\begin{equation}\label{f=fsepxn}
 f(x)=f^{sep}(x^{p^n})\;\; {\rm where}\;\; f^{sep}(x)\in K[x]\;\; {\rm is\; a\; separable\; polynomial\; and}\;\; n\geq 0.
\end{equation}
The equality (\ref{f=fsepxn}) is called a {\bf separable presentation} of the polynomial $f(x)$. The polynomial $f^{sep}(x)$ is called the {\bf separable part} of $f$ and the natural number $n$ is called the {\bf inseparability degree} of $f(x)$ and denoted by $\deg_{ins}(f)$. 
\end{definition}

For the polynomial $f(x)=\sum_{i\geq 0} \mu_ix^i$,  ${\rm coef}(f):= \{\mu_i\, | \, i\geq 0\}$ is the 
  set of its coefficients. Clearly, 
  %\marginpar{f=fsepxn-2}
\begin{equation}\label{f=fsepxn-2}
 {\rm coef}(f)={\rm coef}(f^{sep}).
\end{equation}
Notice that 
 %\marginpar{f=fsepxn-1}
\begin{equation}\label{f=fsepxn-1}
\deg(f)=p^n\deg (f^{sep})\;\; {\rm where}\;\; n=\deg_{ins}(f).
\end{equation}
For the polynomial $f(x)$ as in (\ref{f=fsepxn}),   $f(x)=\sum_{i\geq 0} \l_ix^{ip^n}=\bigg(\sum_{i\geq 0} \l_i^\frac{1}{p^n} x^i\bigg)^{p^n}$, where $n=\deg_{ins}(f)$, and so 

%\marginpar{f=fsepxn-3}
\begin{equation}\label{f=fsepxn-3}
f(x)=\bigg( {f^{sep}}^\frac{1}{p^n}\bigg)^{p^n}\;\; {\rm where}\;\;  {f^{sep}}^\frac{1}{p^n}:=\sum_{i\geq 0} \l_i^\frac{1}{p^n} x^i\in K\Big({\rm coef}(f)^\frac{1}{p^n}\Big)[x]
\end{equation}
is a {\em separable} polynomial over the field $K\Big({\rm coef}(f)^\frac{1}{p^n}\Big)$. Clearly,
%\marginpar{f=fsepxn-4}
\begin{equation}\label{f=fsepxn-4}
{\rm roots}(f)={\rm roots}( {f^{sep}}^\frac{1}{p^n}).
\end{equation}

\begin{proposition}\label{Ext-A12Apr25}%\marginpar{Ext-A12Apr25}
Suppose that $K$ is a field of prime characteristic $p>0$, $L/K$ is a normal finite field extension, i.e. it is a splitting field of  a set of polynomials $\{ f_i(x)=f_i^{sep}(x^{p^{n_i}})\, | \,i\in I\}$, and $L^{pi}/K$ is the largest purely inseparable field extension in the field $L/K$. Then: 
\begin{enumerate}

\item  $L^{pi}=K\bigg(\bigcup_{i\in I} {\rm coef}(f_i)^\frac{1}{p^{n_i}} \bigg)$ where ${\rm coef}(f_i):=\{ \l^\frac{1}{p^{n_i}}\, | \, \l \in 
{\rm coef}(f_i)\}$.

\item $L/L^{pi}$ is a Galois field extension which is generated by the roots of the polynomials  $\{ f_i\, | \, i\in I\}$ or, equivalently, by the roots of separable polynomials $\{ {f_i^{sep}}^\frac{1}{p^{n_i}})\, | \, i\in I\}\subseteq L^{pi}[x]$ (this follows from (\ref{f=fsepxn-4})).

\item  $L^{gal}=K\bigg(\bigcup_{i\in I} {\rm roots}( f_i^{sep}) \bigg)$.

\item  $L/L^{gal}=L^{gal}\bigg(\bigcup_{i\in I} {\rm roots}(f_i)^\frac{1}{p^{n_i}} \bigg)$
 is a purely inseparable field extension.

\item The  field extension $L/L^{pi}$ is a Galois field extension with Galois group $G(L/L^{pi})=G(L/K)=G(L^{gal}/K)$.

\item $L^{pi}=L^{G(L/L^{pi})}=L^{G(L/K)}=L^{G(L^{gal}/K)}$.

\item $L^{gal}=L^{sep}=L^{\CD (L/K)_+}$.

\end{enumerate}
\end{proposition}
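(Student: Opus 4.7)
The plan is to handle the seven parts in an order that exploits both the normal-extension tensor decomposition $L=L^{pi}\otimes_K L^{gal}$ (Lemma \ref{16Nov25}) and the characteristic-$p$ identity $(u+v)^{p^n}=u^{p^n}+v^{p^n}$. Part (7) is immediate from Theorem \ref{23May25}, once one recalls that for any normal finite extension $L/K$ one has $L^{gal}=L^{sep}$: the largest separable subextension of a normal extension is itself normal (via extension of $K$-embeddings into $\bar K$), hence Galois; combined with Theorem \ref{23May25} this gives $L^{gal}=L^{sep}=L^{\CD(L/K)_+}$.

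Next I would prove (3). Put $M := K\!\left(\bigcup_i \mathrm{roots}(f_i^{sep})\right)$. Each root $\alpha$ of $f_i$ satisfies $\alpha^{p^{n_i}}\in \mathrm{roots}(f_i^{sep})\subseteq L$, so $M\subseteq L$; and $M/K$ is a splitting field of separable polynomials, so $M\subseteq L^{gal}$. Conversely, the same observation shows $L/M$ is generated by $p^{n_i}$-th roots of elements of $M$, hence $L/M$ is purely inseparable. Any $\beta\in L^{gal}$ therefore satisfies $\beta^{p^n}\in M$ for some $n$; since $\beta$ is separable over $M$, its minimal polynomial over $M$ divides $(x-\beta)^{p^n}$ and must equal $x-\beta$, forcing $\beta\in M$. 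This yields (3), and (4) is just the description of $L/M$ just used.

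For (1) and (2), put $N := K\!\left(\bigcup_i \mathrm{coef}(f_i)^{1/p^{n_i}}\right)$; each generator is a $p$-power root of an element of $K$, so $N/K$ is purely inseparable and $N\subseteq L^{pi}$. The key identity $\bigl((f_i^{sep})^{1/p^{n_i}}\bigr)^{p^{n_i}}=f_i$ from (\ref{f=fsepxn-3}) shows that $(f_i^{sep})^{1/p^{n_i}}\in N[x]$ and has the same roots in $L$ as $f_i$; separability of $f_i^{sep}$ combined with the fact that $u\mapsto u^{1/p^{n_i}}$ is a field isomorphism in characteristic $p$ implies $(f_i^{sep})^{1/p^{n_i}}$ is separable. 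Hence $L/N$ is a splitting field over $N$ of separable polynomials, so $L/N$ is Galois. Then $L^{pi}/N$ lies inside $L/N$ and is simultaneously purely inseparable and separable, forcing $L^{pi}=N$, which is (1); part (2) follows at once, since $L/L^{pi}=L/N$ is Galois with the stated sets of generators.

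Finally, (5) and (6) are formal consequences. If $\beta\in L^{pi}$ satisfies $\beta^{p^n}\in K$, then for any $g\in G(L/K)$ we have $g(\beta)^{p^n}=\beta^{p^n}$, and uniqueness of $p$-power roots in characteristic $p$ gives $g(\beta)=\beta$; hence $G(L/K)=G(L/L^{pi})$, a Galois group by (2). Restriction to $L^{gal}$ gives an isomorphism $G(L/L^{pi})\xrightarrow{\sim} G(L^{gal}/K)$, with inverse provided by extending any $\sigma\in G(L^{gal}/K)$ as $\mathrm{id}\otimes\sigma$ on $L=L^{pi}\otimes_K L^{gal}$. The first equality in (6) is $L^{pi}=L^{G(L/L^{pi})}$, which holds because $L/L^{pi}$ is Galois; the remaining equalities in (6) are then consequences of (5). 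The only step requiring genuine care, as opposed to bookkeeping, is (1): one must verify that the coefficient $p^{n_i}$-th roots really produce a separable polynomial whose splitting field over $N$ coincides with $L$, and both assertions rely on the additive identity $(u+v)^{p^n}=u^{p^n}+v^{p^n}$ and uniqueness of $p$-power roots conspiring cleanly.
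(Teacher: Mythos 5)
Your proposal is correct and follows essentially the same route as the paper: you introduce the same candidate fields $K\big(\bigcup_i {\rm coef}(f_i)^{1/p^{n_i}}\big)$ and $K\big(\bigcup_i {\rm roots}(f_i^{sep})\big)$, identify them with $L^{pi}$ and $L^{gal}$ by the same ``purely inseparable and separable forces equality'' squeeze, and deduce (5)--(7) from the tensor decomposition $L=L^{pi}\otimes_K L^{gal}$ (Lemma \ref{16Nov25}) and Theorem \ref{23May25}, exactly as the paper does. The only differences are the order of the parts and a few spots where you supply slightly more detail (e.g.\ the minimal-polynomial argument in (3) and the $p$-power-root argument for $G(L/K)=G(L/L^{pi})$), which do not change the approach.
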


\begin{proof} 1, 2 and 5.  Clearly, $L^{pi}\supseteq L^{pi}_c:=K\bigg(\bigcup_{i\in I} {\rm coef}(f_i)^\frac{1}{p^{n_i}} \bigg)$ and the field extension 
$$
L/L^{pi}_c=L^{pi}_c\bigg(\bigcup_{i\in I} {\rm roots}\Big({f_i^{sep}}^\frac{1}{p^{n_i}} \Big) \bigg)
$$ 
is a Galois field extension as it is a splitting field of the set of separable polynomials $\Big\{ {f_i^{sep}}^\frac{1}{p^{n_i}})\, | \, i\in I\Big\}\subseteq L^{pi}_c[x]$. Since $L_c^{pi}\subseteq L^{pi}\subseteq L$, $L^{pi}/L^{pi}_c$ is a purely inseparable field extension and $L/L_c^{pi}$ is a Galois field extension, we must have $L_c^{pi}=L^{pi}$. So, $L/L^{pi}$ is a Galois field extension. Notice that $G(L/L^{pi})\subseteq G(L/K)$.  The field extension $L^{pi}/K$ is a purely inseparable finite field extension. Therefore, $L^{pi}/K$ is a normal field extension with $G(L^{pi}/K)=\{e\}$. Therefore, 
$G(L/K)= G(L/L^{pi})$.

The finite field extension $L/K$ is normal. By Lemma \ref{16Nov25}, $L=L^{pi}\t L^{gal}$. Therefore, $G(L/L^{pi})=G(L^{gal}/K)$.

3 and 4. Clearly, $L^{gal}\supseteq L^{gal}_r:= K\bigg(\bigcup_{i\in I} {\rm roots}( f_i^{sep}) \bigg)$ and the field extension $L^{gal}_r/K$ is a Galois finite field extension. Notice that  
$$
L/L^{gal}_r=L^{gal}_r\bigg(\bigcup_{i\in I} {\rm roots}(f_i)^\frac{1}{p^{n_i}} \bigg).
$$ 
Therefore $L^{gal}= L^{gal}_r$. Hence, 
$L^{gal}=K\bigg(\bigcup_{i\in I} {\rm roots}( f_i^{sep}) \bigg)$ and   $L/L^{gal}=L^{gal}\bigg(\bigcup_{i\in I} {\rm roots}(f_i)^\frac{1}{p^{n_i}} \bigg)$
 is a purely inseparable field extension.

6. By statement 3, the  finite field extension $L/L^{pi}$ is a Galois field extension with Galois group $G(L/L^{pi})=G(L/K)$. By the Galois Theorem,  $L^{pi}=L^{G(L/L^{pi})}=L^{G(L/K)}=L^{G(L^{gal}/K)}$.

7. The field extension $L/K$ is a normal finite field extension. Therefore, $L^{gal}=L^{sep}$.  By Theorem \ref{23May25},  $L^{sep}=L^{\CD (L/K)_+}$.  \end{proof}

{\bf Examples  of the Galois correspondences in Theorem \ref{Arb-1Jun25}.(2) and Theorem \ref{Arb-11Jun25}.(2).}
For a prime number $p>0$, let $\mF_p:=\Z / p\Z$. 

\begin{proposition}\label{XA7Jul25}%\marginpar{XA7Jul25}
Let $p\neq 2$ be a prime number and  $K=\mF_p(x,y)$ be a field of rational functions in the variables $x$ and $y$, and  $L=K(y^\frac{1}{p},x^\frac{1}{2})$.  Then: 
\begin{enumerate}

\item The finite field extension $L/K$ is  normal, $L=L^{pi}\t L^{gal}$ where $L^{pi}= K(y^\frac{1}{p})$ and $L^{gal}=K(x^\frac{1}{2})$.

\item $\CF (L/K)=\CN (L/K)=\{ K, L^{pi},  L^{gal}, L\}$, i.e. all subfields of $L/K$ are normal. Furthermore,  $\CF (L/K)=\{ M^{pi}\t M^{gal}\, | \,  M^{pi}\in \CF (L^{pi}),  M^{gal}\in \CF (L^{gal})\}$ where $\CF (L^{pi})=\{K, L^{pi} \}$ and   $\CF (L^{gal})=\{ K, L^{gal}\}$.

\item {\sc (Analogue of the  Galois correspondence for (normal)  subfields of $L/K$)} 

The map
$$
\CF (L/K)=\CN (F/K)\ra  \CA (E(L/K),L, F, G(L/K)), \;\; M\mapsto \CD (L^{pi}/M^{pi})\t \Big(L^{gal}\rtimes G(L^{gal}/M^{gal}) \Big)
$$
is a bijection where 

%The map
%\begin{eqnarray*}
%\CF (L/K)=\CN (F/K)&\ra & \CA (E(L/K),L, F, G(L/K)), \;\; M\mapsto  C_{E(L/K)}(M)=\CD (L/M)\rtimes G(L/M)\\
%&=&C_{E(L^{pi}/K)}(M^{pi})\t C_{E(L^{gal}/K)}(M^{gal})=E(L^{pi}/M^{pi})\t E(L^{gal}/M^{gal})\%\
%&=&\CD (L^{pi}/M^{pi})\t \Big(L^{gal}\rtimes %G(L^{gal}/M^{gal}) \Big)
%\end{eqnarray*}
%is a bijection with inverse
%\begin{eqnarray*}
% A&\mapsto& C_{E(L/K)}(A)=\Big( L^{pi}\Big)^{A\cap \CD (L^{pi}/K)_+}\t \Big( L^{gal}\Big)^{A\cap G(L^{gal}/K)},\\
% \bigg(A&=&\CD (L^{pi}/N)\t \Big(L^{gal}\rtimes G(L^{gal}/\G) \Big) \mapsto   \Big( L^{pi}\Big)^{ \CD (L^{pi}/N)_+}\t \Big( L^{gal}\Big)^{ G(L^{gal}/\G)}.\bigg)
% \end{eqnarray*}
%where
$$
\CD (L^{pi}/M^{pi})=\begin{cases}
\CD (L^{pi}/K)=\bigoplus_{j=0}^{p-1}L^{pi}\der^j& \text{if }M^{pi}=K,\\
L^{pi}& \text{if }M^{pi}=L^{pi},\\
\end{cases}$$
 and
 $$ 
G (L^{gal}/M^{gal})=\begin{cases}
G(L^{gal}/K)=\{ e,\s\}& \text{if }M^{gal}=K,\\
\{ e\}& \text{if }M^{gal}=L^{gal},\\
\end{cases}
$$
where $\der:=\frac{\der}{\der y^\frac{1}{p}}\in \Der_K(K(y^\frac{1}{p}))$ and $\s (x^\frac{1}{2})=-x^\frac{1}{2}$.

\item {\sc (Analogue of the  Galois correspondence for normal  subfields of a normal  field extension)} 

The map
$$
\CN (L/K)\ra \mL (L^{pi}/K)\times \CN (G(L^{gal}/K)), \;\; M=M^{pi}\t M^{gal}\mapsto \Big(\CD (L^{pi}/M^{pi})_+, G(L^{gal}/M^{gal})\Big)
$$
 is a bijection with inverse $(\CG, H)\mapsto \Big(L^{pi} \Big)^\CG\t \Big( L^{gal}\Big)^H$ where
 $$
\CD (L^{pi}/M^{pi})_+=\begin{cases}
\CD (L^{pi}/K)_+=\bigoplus_{j=1}^{p-1}L^{pi}\der^j& \text{if }M^{pi}=K,\\
\{0\}& \text{if }M^{pi}=L^{pi},\\
\end{cases}
$$
and 
$$ 
G (L^{gal}/M^{gal})=\begin{cases}
G(L^{gal}/K)=\{ e,\s\}& \text{if }M^{gal}=K,\\
\{ e\}& \text{if }M^{gal}=L^{gal}.\\
\end{cases}
$$
\end{enumerate}
\end{proposition}

\begin{proof} 1 and 2. Clearly, $L=K(y^\frac{1}{p})\t K(x^\frac{1}{2})$ where  $K(y^\frac{1}{p})/K$ is a purely inseparable finite field extension   and 
$K(x^\frac{1}{2})$ is a Galois finite field extension with Galois group $G(K(x^\frac{1}{2})/K)=\{ e, \s\}$ where $\s (x^\frac{1}{2})=-x^\frac{1}{2}$.
 Therefore $L=L^{pi}\t L^{gal}$ is a normal finite field extension where $L^{pi}=K(y^\frac{1}{p})$ and $ L^{gal}=K(x^\frac{1}{2})$. Since the degrees $[L^{pi}:K]=p$ and  $[L^{gal}:K]=2$ are prime numbers, $\CF (L^{pi})=\{K, L^{pi} \}$ and   $\CF (L^{gal})=\{ K, L^{gal}\}=\CN (L^{gal})$.
Therefore, 
$$
\CN (L/K)=\{ M^{pi}\t M^{gal}\, | \,  M^{pi}\in \CF (L^{pi}),  M^{gal}\in \CF (L^{gal})\}=\{ K, L^{pi},  L^{gal}, L\}.
$$
 To finish the proof we have to prove that $\CF (L/K)=\CN (L/K)$. Let $M$ be a proper subfield 
of the field extension $L/K$. We have to show that $M/K$ is a normal field extension. Since
 $$[L:K]=[L^{pi}\t L^{gal}:K]=[L^{pi}:K][L^{gal}:K]=2p$$
 and 2 and $p$ are distinct prime numbers and $[M:K] | 2p$, there are two options either $[M:K]=2$ or otherwise $[M:K]=p$. In both cases the degree $[M:K]$ is a prime number. Therefore, the field extension $M/K$ is generated by a single element, say $a\in L\backslash K$, i.e. $M=K(a)$. Since $L=L^{pi}\oplus L^{pi}x^\frac{1}{2}$, we have that $a=\alpha +\beta x^\frac{1}{2}$ for some elements $\alpha, \beta \in L^{pi}$.
 
 Suppose that $\beta =0$. Then the element $a=\beta \in L^{pi}\backslash K$, and  so $M=K(a)=L^{pi}\in \CN (L/K)$.
 
Suppose that $\beta \neq 0$. Then 
$$x^\frac{p}{2}=x^\frac{p-1}{2}x^\frac{1}{2}\;\; {\rm and}\;\; x^\frac{p-1}{2}\in K.$$ 
Since 
 $$ M\ni a^p=\alpha^p+\beta^px^\frac{p}{2}=\alpha^p+\beta^px^\frac{p-1}{2}x^\frac{1}{2}, $$
  $\alpha^p, \beta^px^\frac{p-1}{2}\in K$ and $\beta^px^\frac{p-1}{2}\neq 0$, we must have $x^\frac{1}{2}\in M$. Therefore, $M=K(x^\frac{1}{2})=L^{gal}\in \CN (L/K)$.

3.  Recall that $G(L^{gal}/K)=\{ e,\s\}$. It is well-known and easy to prove that $ \CD (L^{pi}/K)=\bigoplus_{j=0}^{p-1}L^{pi}\der^j$:
$$ S:=\sum_{j=1}^{p-1}L^{pi}\der^j=\bigoplus_{j=1}^{p-1}L^{pi}\der^j\subseteq \CD(L^{pi}/K)\subseteq E(L^{pi}/K)$$ and $\dim_K(S)=p^2=\dim_K ( E(L^{pi}/K))$, we deduce that $S=\CD (L^{pi}/K) =E(L^{pi}/K)$.
 Now,  statement 3 follows from Theorem \ref{Arb-1Jun25}.(2) and Theorem \ref{Arb-11Jun25}.(2).
 
 4. Statement 4 follows from statement 3 and Theorem \ref{11Jun25}.(4).
\end{proof}

 {\bf Licence.} For the purpose of open access, the author has applied a Creative Commons Attribution (CC BY) licence to any Author Accepted Manuscript version arising from this submission.

{\bf Declaration of interests.} The authors declare that they have no known competing financial interests or personal relationships that could have appeared to influence the work reported in this paper.

%{\bf Disclosure statement.} No potential conflict of interest was reported by the author.

{\bf Data availability statement.} Data sharing not applicable – no new data generated.

{\bf Funding.} This research received no specific grant from any funding agency in the public, commercial, or not-for-profit sectors.

\small{

School of Mathematical  and Physical Sciences

Division of Mathematics

University of Sheffield

Hicks Building

Sheffield S3 7RH

UK

email: v.bavula@sheffield.ac.uk}

\end{document}